\spnewtheorem{observation}{Observation}{\bf}{\it}
\spnewtheorem{claimm}{Claim}{\itshape}{\rmfamily}
\newcommand{\mim}[1]{M_{#1}}
\begin{document}
\pagestyle{plain}

\title{Maximal induced matchings in triangle-free graphs\thanks{This work is supported by the Research Council of Norway (project SCOPE, 197548/F20), and by the European Research Council under the European Union's Seventh Framework Programme (FP/2007-2013)/ERC Grant Agreement n.~267959.}}

\author{Manu Basavaraju \and Pinar Heggernes \and Pim van 't Hof \and Reza Saei \and Yngve Villanger}

\institute{
Department of Informatics, University of Bergen, Norway.
\texttt{\{manu.basavaraju,pinar.heggernes,pim.vanthof,reza.saeidinvar, yngve.villanger\}@ii.uib.no}\\
}

\maketitle

\begin{abstract}
An induced matching in a graph is a set of edges whose endpoints induce a $1$-regular subgraph. It is known that any $n$-vertex graph has at most $10^{n/5}\approx 1.5849^n$ maximal induced matchings, and this bound is best possible. We prove that any $n$-vertex triangle-free graph has at most $3^{n/3}\approx 1.4423^n$ maximal induced matchings, and this bound is attained by any disjoint union of copies of the complete bipartite graph $K_{3,3}$. Our result implies that all maximal induced matchings in an $n$-vertex triangle-free graph can be listed in time $O(1.4423^n)$, yielding the fastest known algorithm for finding a maximum induced matching in a triangle-free graph.
\end{abstract}

\section{Introduction}
A celebrated result due Moon and Moser~\cite{MM65} states that any graph on $n$ vertices has at most $3^{n/3}\approx 1.4423^n$ maximal independent sets. Moon and Moser also proved that this bound is best possible by characterizing the extremal graphs as follows: a graph on $n$ vertices has exactly $3^{n/3}$ maximal independent sets if and only if it is the disjoint union of $n/3$ triangles. Given the structure of these extremal graphs, it is natural to investigate how many maximal independent sets a triangle-free graph can have. Hujter and Tuza~\cite{HT93} showed that a triangle-free graph on $n$ vertices has at most $2^{n/2}\approx 1.4143^n$ maximal independent sets; this bound is attained by any 1-regular graph. Later, Byskov~\cite{Byskov} gave an algorithmic proof of the same result, along with more general results.

More recently, Gupta, Raman, and Saurabh~\cite{GRS} showed that for any fixed non-negative integer~$r$, there exists a constant $c<2$ such that any graph on $n$ vertices has at most $c^n$ maximal $r$-regular induced subgraphs. Their upper bound is tight when $r\in \{0,1,2\}$ and hence generalizes the aforementioned result by Moon and Moser. In particular, their result for $r=1$ shows that any $n$-vertex graph has at most $10^{n/5}\approx 1.5849^n$ maximal induced matchings, and this upper bound is attained by any disjoint union of complete graphs on five vertices. The structure of these extremal graphs again raises the question how much the upper bound can be improved for triangle-free graphs. We answer this question by proving the following result.

\begin{theorem}
\label{t-main}
Every triangle-free graph on $n$ vertices contains at most $3^{n/3}$ maximal induced matchings, and this bound is attained by any disjoint union of copies of~$K_{3,3}$.
\end{theorem}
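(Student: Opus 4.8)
The plan is to establish the upper bound by induction on $n$, and to verify the tightness claim by a direct count. For tightness, observe that any two disjoint edges of $K_{3,3}$ together cover two vertices in each colour class and are therefore joined by a further edge of $K_{3,3}$; so every induced matching of $K_{3,3}$ consists of at most one edge, and $K_{3,3}$ has exactly $9=3^{6/3}$ maximal induced matchings, one per edge. Since the maximal induced matchings of a disjoint union are precisely the unions of one maximal induced matching from each component, $t$ disjoint copies of $K_{3,3}$ on $n=6t$ vertices have $9^{t}=3^{n/3}$ of them, which shows the bound is attained.

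For the upper bound, write $f(G)$ for the number of maximal induced matchings of $G$. The inductive step starts with two easy reductions: if $G$ is disconnected then $f(G)$ is the product of the values of $f$ on the components, which by induction is at most $3^{n/3}$; and an isolated vertex may be deleted without changing $f(G)$. So assume $G$ is connected with minimum degree at least~$1$. The core is a branching rule. Fix a vertex $v$. Every maximal induced matching $M$ of $G$ is of exactly one of three types: (i) some edge $uv$ incident with $v$ lies in $M$; (ii) no vertex of $N[v]$ is covered by $M$; (iii) $v$ is uncovered but some neighbour of $v$ is covered. For a fixed $u\in N(v)$, deleting and adjoining the edge $uv$ gives a bijection between the type-(i) matchings containing $uv$ and the maximal induced matchings of $G-(N[u]\cup N[v])$; every type-(ii) matching is a maximal induced matching of $G-N[v]$; and every type-(iii) matching contains an edge $uu'$ with $u\in N(v)$ and $u'\neq v$. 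Summing, and using that $G$ is triangle-free — so that neighbours of a common vertex are pairwise non-adjacent and $|N[x]\cup N[y]|=\deg(x)+\deg(y)$ whenever $xy$ is an edge — we obtain
\[
f(G)\;\le\;\sum_{u\in N(v)} f\big(G-(N[u]\cup N[v])\big)\;+\;f\big(G-N[v]\big)\;+\;\sum_{u\in N(v)}\ \sum_{u'\in N(u)\setminus\{v\}} f\big(G-(N[u]\cup N[u'])\big),
\]
in which every graph on the right has fewer vertices than $G$, and typically many fewer because of triangle-freeness.

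It remains to choose $v$ — a vertex of minimum degree, possibly with further tie-breaking dictated by the structure of its neighbourhood — so that feeding the recurrence into the induction hypothesis $f(H)\le 3^{|V(H)|/3}$ yields $f(G)\le 3^{n/3}$; equivalently, so that the numbers $k_i$ of vertices removed in the various branches satisfy $\sum_i 3^{-k_i/3}\le 1$. When the minimum degree of $G$ is large this is immediate, since then every branch removes many vertices while there are few branches. The main obstacle, where essentially all of the work lies, is the regime of small minimum degree — pendant vertices, vertices of degree $2$, and in particular degree-$2$ vertices whose second neighbourhood is small — for which the recurrence above is not quite strong enough. There one has to refine it (branching two levels out from $v$, or branching first on a pendant edge or on an edge of $M$ near $v$) and to dispose separately of a short list of small or highly structured graphs: components that are paths or cycles, where maximal induced matchings obey an elementary recursion well below $3^{n/3}$, and small dense graphs such as $C_4$, $C_5$ and $K_{3,3}$, of which only $K_{3,3}$ meets the bound. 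I expect the delicate part to be proving that in each remaining case one can always locate a local configuration on which the branching recurrence closes; a measure-and-conquer style weighting of the counting function may be needed to make the bookkeeping go through.
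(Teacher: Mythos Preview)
Your tightness argument and the reductions to connected graphs without isolated vertices are fine, and the branching inequality you write down is a valid upper bound. The gap is that the recurrence does not close in the regime that matters. For $G$ cubic and triangle-free, every type-(i) and every type-(iii) branch removes exactly $d(u)+d(u')=6$ vertices, the single type-(ii) branch removes $|N[v]|=4$, and there are $3$, $6$ and $1$ such branches respectively, so
\[
\sum_i 3^{-k_i/3}\;=\;9\cdot 3^{-2}+3^{-4/3}\;=\;1+3^{-4/3}\;\approx\;1.231\;>\;1.
\]
The simpler two-way split $f(G)\le\sum_{u\in N(v)}f(G-N[\{u,v\}])+f(G-v)$ gives $3\cdot 3^{-2}+3^{-1/3}\approx 1.027$, also above~$1$, and the same failure persists for $4$-regular graphs. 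So the obstacle is not confined to pendant or degree-$2$ vertices as you suggest; the cubic case is already out of reach of your inequality, and the paper in fact shows that a minimal counterexample must be cubic, so this is where the entire difficulty resides.

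What the paper supplies, and your proposal lacks, is a structural tool beyond branching: if no maximal induced matching of $G$ covers both $u$ and $v$, then replacing $G$ by the graph $G_{u\rightarrow v}$ in which $u$ is turned into a twin of $v$ does not decrease $|M_G|$. Choosing a counterexample that first minimises $|V(G)|$ and then the number of twin classes, this forces every such pair $u,v$ already to be twins. That single fact is invoked again and again to eliminate local configurations (two non-adjacent degree-$2$ vertices on a common $C_5$; a lone degree-$2$ vertex on a $C_4$; in the cubic case, two non-adjacent vertices of a $C_5$ sharing an outside neighbour), and it carries the argument through to the conclusion that a counterexample would have to be $K_{3,3}$. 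Deeper branching or a measure-and-conquer weighting might conceivably replace this lemma, but you have not shown how, and the cubic calculation above indicates it would not be routine.
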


We would like to mention some implications of the above theorem. There exist algorithms that list the maximal independent sets of any graph with polynomial delay~\cite{JohnsonP88,TsukiyamaIAS77}, which means that the time spent between the output of two successive maximal independent sets is polynomial in the size of the graph. Together with the aforementioned upper bounds on the number of maximal independent sets, this implies that the maximal independent sets of an $n$-vertex graph $G$ can be listed in time $O^*(3^{n/3})$, or in time $O^*(2^{n/2})$ in case $G$ is triangle-free.\footnote{We use the $O^*$-notation to suppress polynomial factors, i.e., we write $O^*(f(n))$ instead of $O(f(n)\cdot n^{O(1)})$ for any function $f$.}

Cameron~\cite{Cam89} observed that the maximal induced matchings of a graph $G$ are exactly the maximal independent sets in the square of the line graph of $G$. Consequently, the maximal induced matchings of any graph can be listed with polynomial delay. Combining this with the aforementioned upper bound by Gupta et al.~\cite{GRS} yields an algorithm for listing all maximal induced matchings of an $n$-vertex graph in time $O^*(10^{n/5})=O(1.5849^n)$. Gupta et al.~\cite{GRS} also obtained an algorithm for finding a maximum induced matching in an $n$-vertex graph in time $O(1.4786^n)$, which is the current fastest algorithm for solving this problem. Theorem~\ref{t-main} implies that we can do better on triangle-free graphs, as the following two results show. We point out that the problem of finding a maximum induced matching remains NP-hard on subcubic planar bipartite graphs~\cite{HOV13}, a small subclass of triangle-free graphs.

\begin{corollary}
For any triangle-free graph on $n$ vertices, all its maximal induced matchings can be listed in time $O^*(3^{n/3})=O(1.4423^n)$ with polynomial delay.
\end{corollary}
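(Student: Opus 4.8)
\medskip
\noindent\textbf{Proof proposal.}
The plan is to derive the corollary directly from Theorem~\ref{t-main} together with the polynomial-delay enumeration facts recalled in the introduction, so that essentially all of the content is already packaged inside Theorem~\ref{t-main}. Let $G$ be a triangle-free graph on $n$ vertices. First I would form the auxiliary graph $H := (L(G))^2$, the square of the line graph of $G$; this can be constructed in time polynomial in $n$, and $H$ has exactly $|E(G)| \le \binom{n}{2}$ vertices, so its size is polynomial in $n$. By Cameron's observation~\cite{Cam89}, the maximal induced matchings of $G$ are in one-to-one correspondence with the maximal independent sets of $H$, and this correspondence is computable in polynomial time in both directions (a set of edges of $G$ versus the corresponding set of vertices of $H$).

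Next I would run one of the classical polynomial-delay algorithms for enumerating maximal independent sets~\cite{JohnsonP88,TsukiyamaIAS77} on $H$. Since $|V(H)|$ is polynomial in $n$, the delay between two consecutive outputs is polynomial in $n$, and translating each maximal independent set of $H$ back to a maximal induced matching of $G$ costs only an additional polynomial factor; hence the maximal induced matchings of $G$ are listed with polynomial delay. This already yields the ``polynomial delay'' part of the statement, independently of the triangle-free hypothesis.

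Finally, to obtain the $O^*(3^{n/3})$ bound on the total running time, I would simply bound the number of outputs: by Theorem~\ref{t-main}, the triangle-free graph $G$ has at most $3^{n/3}$ maximal induced matchings, so the enumeration terminates after at most $3^{n/3}$ outputs. Multiplying the number of outputs by the polynomial delay and adding the polynomial set-up cost gives a total running time of $O^*(3^{n/3}) = O(1.4423^n)$. I do not expect any genuine obstacle here beyond these bookkeeping steps — the only points worth stating carefully are that squaring the line graph keeps the instance of polynomial size and that Theorem~\ref{t-main} is exactly what supplies the bound on the number of outputs; the real difficulty lies entirely in the proof of Theorem~\ref{t-main}.
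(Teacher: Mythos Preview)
Your proposal is correct and follows exactly the approach the paper outlines in the introduction: use Cameron's observation that maximal induced matchings of $G$ correspond to maximal independent sets of $(L(G))^2$, enumerate these with a polynomial-delay algorithm~\cite{JohnsonP88,TsukiyamaIAS77}, and bound the total number of outputs by Theorem~\ref{t-main}. The paper does not give a separate formal proof of this corollary beyond that reasoning, so there is nothing to add.
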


\begin{corollary}
For any triangle-free graph $G$ on $n$ vertices, a maximum induced matching in $G$ can be found in time $O^*(3^{n/3})=O(1.4423^n)$.
\end{corollary}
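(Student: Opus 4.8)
The plan is to reduce the optimization problem to the enumeration problem already handled by the preceding corollary. First I would observe that every maximum induced matching of $G$ is, in particular, a \emph{maximal} induced matching: if an edge could be added to it while keeping the set of endpoints $1$-regular, then it would not have been of maximum size. Consequently, a maximum induced matching can be obtained by scanning through all maximal induced matchings of $G$ and returning one of largest cardinality.

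Next I would invoke the first corollary above, which allows us to list all maximal induced matchings of the triangle-free graph $G$ with polynomial delay, the entire enumeration taking time $O^*(3^{n/3})$ by Theorem~\ref{t-main}. I would run this enumeration while maintaining a single ``champion'' solution, namely the largest maximal induced matching encountered so far, updated in polynomial time each time the enumeration produces a new maximal induced matching. When the enumeration halts, the stored champion is a maximum induced matching of $G$. Since the delay between successive outputs is polynomial and the number of outputs is at most $3^{n/3}$, the total running time is $O^*(3^{n/3}) = O(1.4423^n)$, and only polynomial extra space is used.

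There is essentially no obstacle here: correctness is exactly the elementary observation that an optimal solution is necessarily maximal, and the running time bound is immediate from the first corollary. All of the real work is carried out in Theorem~\ref{t-main} and in the polynomial-delay enumeration underlying the first corollary; the present statement is a direct consequence.
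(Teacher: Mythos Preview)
Your proposal is correct and is exactly the argument the paper intends: a maximum induced matching is in particular maximal, so enumerating all maximal induced matchings (via the first corollary, which combines Theorem~\ref{t-main} with polynomial-delay enumeration) and keeping the largest one yields a maximum induced matching in time $O^*(3^{n/3})$. The paper gives no separate proof for this corollary, treating it as an immediate consequence in precisely the way you describe.
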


\section{Definitions and Notations}
All graphs we consider are finite, simple and undirected. We refer the reader to the monograph by Diestel~\cite{diestel} for graph terminology and notation not defined below. 

Let $G$ be a graph. For a vertex $v\in V(G)$, we write $N_G(v)$ and $N_G[v]$ to denote open and closed neighborhoods of $v$, respectively. Let $A \subseteq V(G)$. The closed neighborhood of $A$ is defined as $N_G[A]=\bigcup_{v\in A}N_G[v]$, and the open neighborhood of $A$ is $N_G(A) = N_G[A] \setminus A$. We write $G[A]$ to denote the subgraph of $G$ induced by $A$, and we write $G - A$ to denote the graph $G[V(G)\setminus A]$. If $A=\{v\}$, then we simply write $G-v$ instead of $G-\{v\}$. For any non-negative integer $r$, we say that $G$ is {\em $r$-regular} if the degree of every vertex in $G$ is~$r$. A $3$-regular graph is called {\em cubic}. A cycle $C$ with vertices $v_1,v_2\ldots,v_k$ and edges $v_1v_2,\ldots,v_{k-1}v_k,v_kv_1$ is denoted by $C=v_1v_2\cdots v_k$.

A {\it matching} in $G$ is a subset $M\subseteq E(G)$ such that no two edges in $M$ share an endpoint. For a matching $M$ in $G$ and a vertex $v\in V(G)$, we say that $M$ {\it covers} $v$ if $v$ is an endpoint of an edge in $M$. A matching $M$ is called {\it induced} if the subgraph induced by endpoints of the edges in $M$ is $1$-regular. An induced matching $M$ in $G$ is {\it maximal} if there exists no induced matching $M'$ in $G$ such that $M\subsetneq M'$. We write $\mim{G}$ to denote the set of all maximal induced matchings in $G$. Let $X$ and $Y$ be two disjoint subsets of $V(G)$. We define $\mim{G}(X,Y)$ to be the set of all maximal induced matchings of $G$ that cover no vertex of $X$ and every vertex of $Y$. Clearly, $\mim{G}=\mim{G}(\emptyset,\emptyset)$. When there is no ambiguity we omit subscripts from the notations. 

\section{Twins and Maximal Induced Matchings}

Let $G$ be a graph. Two vertices $u,v\in V(G)$ are {\it (false) twins} if $N_G(u)=N_G(v)$. In this paper, whenever we write twin, we mean false twin. For every vertex $u\in V(G)$, the {\it twin set} of $u$ is defined as $T_G(u)=\{v\in V(G) \mid N_G(u)=N_G(v) \}$, i.e., $T_G(u)$ consists of the vertex $u$ and all its twins. All the twin sets together form a partition of the vertex set of $G$, and we write $\tau(G)$ to denote the number of sets in this partition, i.e., $\tau(G)$ denotes the number of twin sets in $G$.

\begin{definition}
\label{d-operation1}
Let $G$ be a graph. For any two non-adjacent vertices $u,v\in V(G)$, we define $G_{u\rightarrow v}$ to be the graph obtained from $G$ by making $u$ into a twin of $v$ by deleting the edge $ux$ for every $x\in N_G(u)\setminus N_G(v)$ and adding the edge $uy$ for every $y\in N_G(v)\setminus N_G(u)$.
\end{definition}

The following lemma identifies certain pairs of vertices $u$ and $v$ for which the operation in Definition~\ref{d-operation1} does not decrease the number of maximal induced matchings in the graph. This lemma will play a crucial role in the proof of our main result. Note that this lemma holds for general graphs $G$, and not only for triangle-free graphs.

\begin{lemma}
\label{main lemma}
Let $G$ be a graph and let $u,v\in V(G)$. If no maximal induced matching in $G$ covers both $u$ and $v$, then $|M_{G_{u \rightarrow v}}|\geq |M_G|$ or $|M_{G_{v\rightarrow u}}|\geq |M_G|$.
\end{lemma}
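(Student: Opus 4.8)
The plan is to partition the maximal induced matchings of $G$ according to their behavior on $u$ and $v$, and then to set up an injection from each class into the maximal induced matchings of one of the two modified graphs. Since no maximal induced matching covers both $u$ and $v$, every $M\in \mim{G}$ falls into exactly one of four types: (a) $M$ covers neither $u$ nor $v$; (b) $M$ covers $u$ but not $v$; (c) $M$ covers $v$ but not $u$; (d) $M$ covers neither, but becomes non-maximal if we try to add an edge at $u$ or at $v$ — actually (a) already captures "covers neither", so the real split is into the three classes $\mim{G}(\{v\},\emptyset)\cap\{M : M \text{ covers } u\}$, the symmetric one with $u,v$ swapped, and $\mim{G}(\{u,v\},\emptyset)$. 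Write $a=|\mim{G}(\{u,v\},\emptyset)|$, $b$ for the number covering $u$ but not $v$, and $c$ for the number covering $v$ but not $u$, so that $|M_G|=a+b+c$. I would aim to show $|M_{G_{u\to v}}|\ge a+b$ (roughly: the matchings not touching $u$ survive, and those covering $u$ can be rerouted since $u$ now looks like $v$) and symmetrically $|M_{G_{v\to u}}|\ge a+c$; if in addition one can argue that the "shared" class of size $a$ is not double-lost, a counting/averaging argument then gives that one of the two bounds is at least $a+b+c=|M_G|$. More carefully, the cleanest route is probably: exhibit an injection $\phi: \mim{G}(\{u\},\emptyset)\to \mim{G_{v\to u}}$ and an injection $\psi:\mim{G}(\{v\},\emptyset)\to \mim{G_{u\to v}}$, observe that $|\mim{G}(\{u\},\emptyset)|+|\mim{G}(\{v\},\emptyset)|=|M_G|+a\ge |M_G|$ because every matching covering neither $u$ nor $v$ is counted in both terms while every other matching is counted once, and conclude $\max(|M_{G_{v\to u}}|,|M_{G_{u\to v}}|)\ge \tfrac12(|M_G|+a)$ — which is not quite enough, so the injections must be refined to land in disjoint parts or the two classes must be made to partition something of size exactly $|M_G|$.

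So the heart of the argument is the construction of a single injection, say into $\mim{G_{v\to u}}$, whose domain has size at least $|M_G|$; by symmetry we then take whichever of $u\to v$, $v\to u$ actually delivers this. The natural candidate domain is $\mim{G}(\{v\},\emptyset)$, the maximal induced matchings of $G$ that do not cover $v$: I claim that a maximal induced matching $M$ of $G$ with $v$ uncovered is still an induced matching in $G_{v\to u}$ (deleting $v$'s old edges and giving it $N_G(u)$'s edges cannot create a conflict among edges all of whose endpoints avoid $v$ and lie in a configuration that was induced), and it can be greedily extended to a maximal one, $M\mapsto \widehat M$. The issue is injectivity of $M\mapsto\widehat M$, since the extension is not canonical and two different $M$'s could extend to the same thing; the standard fix is to define $\phi(M)$ as, e.g., the lexicographically-first maximal induced matching of $G_{v\to u}$ containing $M$, and then argue that from $\phi(M)$ one can recover $M$ — typically because $M$ is exactly the set of edges of $\phi(M)$ that were already "forced" in $G$, or because $M=\phi(M)$ restricted to edges not incident to the changed neighborhood. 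This recovery step is where triangle-freeness would enter if needed, although the lemma is stated for general graphs, so I expect the recovery to work purely from the twin structure: in $G_{v\to u}$ the vertices $u$ and $v$ are twins, and twins interact with induced matchings in a very rigid way (an induced matching can cover at most one vertex of a twin class, and which twin vertices are "available" is determined by the rest of the matching).

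I would therefore organize the proof as: (1) record the four-way (really three-way) partition of $\mim{G}$ and the identity $|\mim{G}(\{u\},\emptyset)|+|\mim{G}(\{v\},\emptyset)|-|\mim{G}(\{u,v\},\emptyset)|=|M_G|$; (2) prove that every $M\in\mim{G}(\{v\},\emptyset)$ remains an induced matching in $G_{v\to u}$, and symmetrically for $u$; (3) for one of the two operations — chosen after the fact — build the injection from the appropriate class of size $\ge |M_G|$ into $M_{G_{\cdot\to\cdot}}$, via canonical (lex-first) extension plus a recovery map exploiting that $u,v$ become twins; (4) conclude. The main obstacle is step (3), specifically making the extension-and-recovery argument genuinely injective and choosing the domain large enough that $\max$ over the two operations beats $|M_G|$; the combinatorial accounting in step (1) must be set up so that the "overlap" class $\mim{G}(\{u,v\},\emptyset)$ is absorbed correctly (it should inject into \emph{both} $M_{G_{u\to v}}$ and $M_{G_{v\to u}}$, so that one operation picks up overlap-plus-$u$-side and the other picks up overlap-plus-$v$-side, and the two sides together with one copy of the overlap reconstitute $|M_G|$, forcing the larger of the two counts to be at least $|M_G|$).
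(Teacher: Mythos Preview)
Your three-way partition and the observation that matchings avoiding $v$ remain induced matchings in $G_{v\to u}$ are the right starting points, but the proposal has a real gap exactly where you flag it: the extend-and-recover injection is both unnecessary and, as you suspect, not obviously injective. The paper avoids this entirely by proving something stronger: a matching $M\in\mim{G}(\{v\},\{u\})$ is not merely an induced matching in $G'=G_{v\to u}$ but is already \emph{maximal} there, and conversely, so in fact $\mim{G}(\{v\},\{u\})=\mim{G'}(\{v\},\{u\})$ as sets. Similarly $\mim{G}(\{u,v\},\emptyset)\subseteq\mim{G'}(\{u,v\},\emptyset)$. The maximality-preservation argument is short: if $M\cup\{xy\}$ were induced in $G'$, then since $M$ covers $u$ and $u,v$ are twins in $G'$ we have $v\notin\{x,y\}$, so $xy\in E(G)$ and $M\cup\{xy\}$ is induced in $G$, contradicting maximality there. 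No lex-first extension, no recovery map.

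The second idea you are missing is what makes the counting close. With $a=|\mim{G}(\{u,v\},\emptyset)|$, $b$ the number covering $u$ only, $c$ the number covering $v$ only, the inclusions above give $|\mim{G'}(\{u,v\},\emptyset)|\ge a$ and $|\mim{G'}(\{v\},\{u\})|=b$. Now use that $u$ and $v$ are twins \emph{in $G'$}: the map ``swap the roles of $u$ and $v$'' is a bijection between $\mim{G'}(\{v\},\{u\})$ and $\mim{G'}(\{u\},\{v\})$, so the latter also has size $b$. Hence $|M_{G'}|\ge a+2b$. Choosing at the outset (WLOG) the labelling so that $b\ge c$ gives $a+2b\ge a+b+c=|M_G|$. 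Your averaging attempt stalled precisely because you were trying to account for the class of size $c$ directly inside $G'$; the trick is instead to replace it by a second copy of the class of size $b$, which twin symmetry in $G'$ supplies for free.
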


\begin{proof}
Without loss of generality, we assume that the number of matchings in $M_G$ that cover $u$ is greater than or equal to the number of matchings in $M_G$ that cover $v$, i.e., $|M_G(\emptyset,\{u\})|\ge |M_G(\emptyset,\{v\})|$. Since every matching in $M_G$ that covers $u$ does not cover $v$ due to the assumption that $M_G(\emptyset,\{u,v\})=\emptyset$, it holds that $M_G(\emptyset,\{u\})=M_G(\{v\},\{u\})$. By symmetry, we also have that $M_G(\emptyset,\{v\})=M_G(\{u\},\{v\})$. This implies that $|M_G(\{v\},\{u\})|\geq |M_G(\{u\},\{v\})|$. We now use this fact to prove that $|M_{G_{v\rightarrow u}}|\geq |M_G|$.

For convenience, we write $G'=G_{v \rightarrow u}$.
The set $M_G$ of all maximal induced matchings in $G$ can be partitioned as follows:
$$\mim{G}=\mim{G}(\{v\},\{u\}) \uplus \mim{G}(\{u\},\{v\}) \uplus \mim{G}(\emptyset,\{u,v\}) \uplus \mim{G}(\{u,v\},\emptyset).$$
We can partition $\mim{G'}$ in the same way:
$$\mim{G'}=\mim{G'}(\{v\},\{u\}) \uplus \mim{G'}(\{u\},\{v\}) \uplus \mim{G'}(\emptyset,\{u,v\}) \uplus \mim{G'}(\{u,v\},\emptyset).$$

We claim that $\mim{G}(\{v\},\{u\}) = \mim{G'}(\{v\},\{u\})$. Let $M\in M_G(\{v\},\{u\})$. We claim that $M\in M_{G'}(\{v\},\{u\})$. It is easy to verify that $M$ is a induced matching in $G'$, as we only change edges incident with $v$ when transforming $G$ into $G'$, and $M$ does not cover $v$. For contradiction, suppose $M$ is not a maximal induced matching in $G'$. Then there is an edge $xy\in E(G')$ such that $M\cup \{xy\}$ is an induced matching in $G'$. Since $u$ and $v$ are twins in $G'$ and $M$ covers $u$, we find that $v\notin \{x,y\}$. This implies that $xy\in E(G)$, so $M\cup \{xy\}$ is a matching in~$G$ that does not cover $v$. In fact, $M\cup \{xy\}$ is an induced matching in $G$, since every edge in $E(G)\setminus E(G')$ is incident with $v$. This contradicts the maximality of $M$ in $G$. Hence we have that $\mim{G}(\{v\},\{u\}) \subseteq \mim{G'}(\{v\},\{u\})$. To show why $\mim{G'}(\{v\},\{u\}) \subseteq \mim{G}(\{v\},\{u\})$, let $M'\in M_{G'}(\{v\},\{u\})$. For similar reasons as before, $M'$ is an induced matching in $G$. To show that $M'$ is maximal in $G$, suppose for contradiction that there is an edge $xy\in E(G)$ such that $M'\cup \{xy\}$ is an induced matching in $G$. Then $v\notin \{x,y\}$, this time due to the assumption that no maximal induced matching in $G$ covers both $u$ and $v$. Now we can use similar arguments as before to conclude that $M'\cup \{x,y\}$ is an induced matching in $G'$, yielding the desired contradiction.

By assumption, we have $\mim{G}(\emptyset,\{u,v\})=\emptyset$. Since $u$ and $v$ are twins in $G'$ by construction, we also know that $\mim{G'}(\emptyset,\{u,v\})=\emptyset$ and $\mim{G'}(\{v\},\{u\}) = \mim{G'}(\{u\},\{v\})$. Recall that $|\mim{G}(\{v\},\{u\})| \ge |\mim{G}(\{u\},\{v\})|$. Hence we can infer that $|\mim{G'}(\{u\},\{v\})| \ge |\mim{G}(\{u\},\{v\})|$. Hence, in order to show that $|M_{G'}|\geq |M_G|$, it suffices to show that $|\mim{G'}(\{u,v\},\emptyset) |\ge |\mim{G}(\{u,v\},\emptyset)|$.

Let $M \in \mim{G}(\{u,v\},\emptyset)$. We claim that $M\in M_{G'}(\{u,v\},\emptyset)$. It is easy to see that $M$ is an induced matching in $G'$, as the only edges that are modified are incident with $v$ and $M$ does not cover $v$. Suppose, for contradiction, that $M$ is not a maximal induced matching in $G'$. Then there exists an edge $xy\in E(G')$ such that $M\cup \{xy\}$ is an induced matching in $G'$. If $v \notin \{x,y\}$, then $M\cup \{xy\}$ is also an induced matching in $G$, contradicting the maximality of $M$. Thus we have $v \in \{x,y\}$. Without loss of generality, suppose $x=v$. Let $M'=M \cup \{vy\}$. Now consider $M''=M \cup \{uy\}$.
Since $M'$ is induced matching and $u$ and $v$ are twins in $G'$, we infer that $M''$ is also an induced matching in $G'$. Note that the edge $uy$ is also present in $G$, so $M''$ is an induced matching in $G$. This contradicts the maximality of $M$, implying that $M \in \mim{G'}(\{u,v\},\emptyset)$ and consequently $\mim{G}(\{u,v\},\emptyset)\subseteq M_{G'}(\{u,v\},\emptyset)$. This completes the proof of Lemma~\ref{main lemma}.
\qed
\end{proof}

For our purposes, we need to extend Definition~\ref{d-operation1} as follows.

\begin{definition}
\label{d-operation2}
Let $G$ be a graph. For any two non-adjacent vertices $u,v\in V(G)$, the graph $G_{T_G(u)\rightarrow v}$ is the graph obtained from $G$ by making each vertex of $T_G(u)$ into a twin of $v$ as follows: for every $u'\in T_G(u)$, delete the edge $u'x$ for every $x\in N_G(u)\setminus N_G(v)$ and add the edge $u'y$ for every $y\in N_G(v)\setminus N_G(u)$.
\end{definition}

The following lemma is an immediate corollary of Lemma~\ref{main lemma}, since we can repeatedly apply the operation in Definition~\ref{d-operation1} on all the vertices in $T_G(u)$.

\begin{lemma}
\label{l-morematchings}
Let $G$ be a graph and let $u,v\in V(G)$. If no maximal induced matching in $G$ covers both $u$ and $v$, then $|M_{G_{T_G(u) \rightarrow v}}|\geq |M_G|$ or $|M_{G_{T_G(v)\rightarrow u}}|\geq |M_G|$.
\end{lemma}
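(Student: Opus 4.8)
The plan is to derive Lemma~\ref{l-morematchings} from Lemma~\ref{main lemma} by an induction on the size of the twin set being collapsed. Write $T_G(u)=\{u=u_1,u_2,\dots,u_k\}$. The operation $G_{T_G(u)\rightarrow v}$ simply makes \emph{every} member of this twin set a twin of $v$, which is the same as performing the single-vertex operation of Definition~\ref{d-operation1} once for each $u_i$; so the idea is to factor $G_{T_G(u)\rightarrow v}$ as a composition $(\cdots((G_{u_1\rightarrow v})_{u_2\rightarrow v})\cdots)_{u_k\rightarrow v}$ and similarly for $G_{T_G(v)\rightarrow u}$, and to apply Lemma~\ref{main lemma} at each step. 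Without loss of generality assume (as in the proof of Lemma~\ref{main lemma}) that $|M_G(\emptyset,\{u\})|\ge|M_G(\emptyset,\{v\})|$, so that the lemma already gives $|M_{G_{v\rightarrow u}}|\ge|M_G|$; I would then argue that the same direction can be iterated, producing $|M_{G_{T_G(v)\rightarrow u}}|\ge|M_G|$.

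The key steps, in order, are as follows. First, fix an enumeration $v_1,\dots,v_m$ of $T_G(v)$ and set $G^{(0)}=G$, $G^{(j)}=(G^{(j-1)})_{v_j\rightarrow u}$, so that $G^{(m)}=G_{T_G(v)\rightarrow u}$; one checks that this composition is well defined, i.e.\ each $v_j$ is still non-adjacent to $u$ in $G^{(j-1)}$ (the earlier operations only alter edges incident with $v_1,\dots,v_{j-1}$, none of which equals $u$ or $v_j$, and $u,v$ were non-adjacent to begin with, as were all twins of $v$). Second, I would show that for each $j$ no maximal induced matching in $G^{(j-1)}$ covers both $v_j$ and $u$: this is the hypothesis-preservation step, and it follows because after making $v_1,\dots,v_{j-1}$ twins of $u$, any maximal induced matching covering $u$ cannot cover any twin of $u$, and $v_j$ behaves exactly like $v$ with respect to $u$ — more carefully, one verifies that $M_{G^{(j-1)}}(\emptyset,\{v_j,u\})=\emptyset$ by the same twin-swapping argument used inside the proof of Lemma~\ref{main lemma}. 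Third, having checked the hypothesis, Lemma~\ref{main lemma} applied to the pair $v_j,u$ in $G^{(j-1)}$ gives $|M_{G^{(j-1)}_{v_j\rightarrow u}}|\ge|M_{G^{(j-1)}}|$ or $|M_{G^{(j-1)}_{u\rightarrow v_j}}|\ge|M_{G^{(j-1)}}|$; I want the first alternative, and I would justify choosing it by noting that the ``majority covers $u$'' condition is inherited along the chain (since collapsing a twin of $v$ onto $u$ only moves matchings from the ``covers $v_j$'' side to the ``covers $u$'' side, never the reverse). Chaining the inequalities $|M_{G^{(m)}}|\ge|M_{G^{(m-1)}}|\ge\cdots\ge|M_{G}|$ then yields $|M_{G_{T_G(v)\rightarrow u}}|\ge|M_G|$, which is one of the two disjuncts in the statement.

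I expect the main obstacle to be the second step: cleanly verifying that the hypothesis ``no maximal induced matching covers both $x$ and $u$'' is preserved for the next twin $v_j$ in the intermediate graph $G^{(j-1)}$, and — relatedly — that the asymmetric choice ``the $G_{v\rightarrow u}$ direction works'' remains valid at every stage rather than only at the first. The paper finesses this by simply asserting that Lemma~\ref{l-morematchings} is ``an immediate corollary'' obtained by ``repeatedly applying the operation''; the honest version of that sentence is exactly the bookkeeping in steps two and three above, and the cleanest way to discharge it is to prove a small invariant: after processing $v_1,\dots,v_{j-1}$, the vertices $u,v_j$ are non-adjacent, satisfy $M_{G^{(j-1)}}(\emptyset,\{u,v_j\})=\emptyset$, and satisfy $|M_{G^{(j-1)}}(\emptyset,\{u\})|\ge|M_{G^{(j-1)}}(\emptyset,\{v_j\})|$ — all three of which follow from the structural facts already established in the proof of Lemma~\ref{main lemma} (namely $M_{G'}(\{v\},\{u\})=M_{G'}(\{u\},\{v\})$ when $u,v$ are twins, and the containments between the four parts of the partition). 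Once this invariant is in hand, the induction closes and the symmetric statement for $T_G(u)$ follows by interchanging the roles of $u$ and $v$.
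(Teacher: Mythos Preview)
Your proposal is correct and follows exactly the route the paper has in mind: the paper's own proof is the single sentence ``immediate corollary of Lemma~\ref{main lemma}, since we can repeatedly apply the operation in Definition~\ref{d-operation1} on all the vertices in $T_G(u)$,'' and you have supplied the bookkeeping that this sentence suppresses. Your identification of the three-part invariant (non-adjacency, $M_{G^{(j-1)}}(\emptyset,\{u,v_j\})=\emptyset$, and the majority inequality) is precisely what is needed to make the iteration rigorous; in particular, the equality $M_{G'}(\{v\},\{u\})=M_G(\{v\},\{u\})$ from the proof of Lemma~\ref{main lemma} is exactly what propagates both the empty-intersection hypothesis and the majority condition to the next stage, since a matching in $G^{(j)}$ covering $u$ automatically avoids the newly-created twin $v_j$ and hence lives in $M_{G^{(j-1)}}(\{v_j\},\{u\})=M_{G^{(j-1)}}(\emptyset,\{u\})$. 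One small imprecision: your heuristic that the collapse ``moves matchings from the `covers $v_j$' side to the `covers $u$' side'' is not literally what happens---in fact both counts $|M_{G^{(j)}}(\emptyset,\{u\})|$ and $|M_{G^{(j)}}(\emptyset,\{v_{j+1}\})|$ remain equal to their values in $G$---but the formal invariant you state does hold, and that is all you need.
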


We also need the following two lemmas in the proof of our main result.

\begin{lemma}
\label{l-trianglefree}
Let $G$ be a triangle-free graph. For any two non-adjacent vertices $u,v\in V(G)$, the graph $G_{T_G(u)\rightarrow v}$ is triangle-free.
\end{lemma}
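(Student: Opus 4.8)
The plan is to show that the operation of Definition~\ref{d-operation2} cannot create a triangle, by analysing where a hypothetical triangle in $G' := G_{T_G(u)\rightarrow v}$ could live relative to the modified vertex set $T_G(u)$. First I would record the basic structural facts about $G'$: the only edges affected are those incident with vertices of $T_G(u)$, and after the operation every vertex of $T_G(u)$ has neighbourhood exactly $N_G(v)$, so each such vertex is a twin of $v$ in $G'$. In particular, since $u$ and $v$ are non-adjacent and vertices of $T_G(u)$ retain $N_G(v)$ as their neighbourhood, no vertex of $T_G(u)$ is adjacent in $G'$ to $v$ or to any other vertex of $T_G(u)$; also $v\notin T_G(u)$ because $v$ would otherwise be adjacent to itself.

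Next I would suppose, for contradiction, that $G'$ contains a triangle on vertices $a,b,c$, and split into cases according to $|\{a,b,c\}\cap T_G(u)|$. If this intersection is empty, then all three edges $ab,bc,ca$ are present in $G$ (none is incident with $T_G(u)$), so $G$ already contains a triangle, contradicting triangle-freeness of $G$. If exactly one of them, say $a$, lies in $T_G(u)$, then $b,c\notin T_G(u)$, so $bc\in E(G)$, and the edges $ab,ac\in E(G')$ force $b,c\in N_{G'}(a)=N_G(v)$; hence $v,b,c$ form a triangle in $G$ (the edges $vb$ and $vc$ exist in $G$, and $bc\in E(G)$), again contradicting triangle-freeness of $G$. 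The remaining cases, where two or all three of $a,b,c$ lie in $T_G(u)$, are immediately impossible: two vertices of $T_G(u)$ are non-adjacent in $G'$ as observed above, so the triangle cannot use such an edge.

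The only step requiring a little care — and the one I expect to be the main (mild) obstacle — is the case with exactly one vertex in $T_G(u)$, where one must be sure that the ``transported'' edges $ab$ and $ac$ really do correspond to genuine edges $vb$ and $vc$ of $G$: this is exactly the content of $N_{G'}(u')=N_G(v)$ for $u'\in T_G(u)$, which holds by construction since the operation deletes all of $N_G(u)\setminus N_G(v)$ from $u'$ and adds all of $N_G(v)\setminus N_G(u)$, leaving precisely $N_G(v)$. Everything else is a direct case check, so the lemma follows.
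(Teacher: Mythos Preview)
Your argument is correct. One small quibble: the aside ``$v\notin T_G(u)$ because $v$ would otherwise be adjacent to itself'' is not a valid justification (membership in $T_G(u)$ only says $N_G(v)=N_G(u)$, not that $v\in N_G(v)$), and in fact $v$ may well lie in $T_G(u)$ when $u$ and $v$ are already twins. Fortunately you never use this claim: in your Case~1 you only need $v\neq b$ and $v\neq c$, which follows immediately from $b,c\in N_G(v)$, so the proof stands as written.

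Your route differs from the paper's in organisation. The paper does not case-split on $|\{a,b,c\}\cap T_G(u)|$; instead it observes that any triangle in $G'$ must contain a newly added edge $u'x$ with $u'\in T_G(u)$ and $x\in N_G(v)\setminus N_G(u)$, notes that $N_G(v)$ stays independent in $G'$ so the third vertex $y$ lies outside $N_G(v)$, and then argues that the edge $u'y$ must already have been present in $G$ and hence should have been deleted by the operation---a contradiction via the deletion rule rather than by exhibiting a triangle in $G$. Your approach, by contrast, uses the single structural fact $N_{G'}(u')=N_G(v)$ to transport the two edges at $a$ over to $v$, producing an explicit triangle $vbc$ in $G$. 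Both are short, but your case analysis is a bit more transparent and avoids tracking which edges were added versus deleted; the paper's version is slightly terser once one sees why the third vertex cannot be a neighbour of $v$.
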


\begin{proof}
Let $u,v\in V(G)$. For contradiction, suppose that $G_{T_G(u)\rightarrow v}$ contains a triangle $C$. Observe that every edge that was added to $G$ in order to create $G_{T_G(u)\rightarrow v}$ is incident with a vertex in $T_G(u)$ and a vertex in $N_G(v)\setminus N_G(u)$. Hence, $C$ contains an edge $u'x$ such that $u'\in T_G(u)$ and $x\in N_G(v)\setminus N_G(u)$. Let $y$ be the third vertex of $C$. Since $G$ is triangle-free, the set $N_G(v)$ forms an independent set in both $G$ and $G_{T_G(u)\rightarrow v}$. This implies in particular that $y$ is not adjacent to $v$ in $G_{T_G(u)\rightarrow v}$, and since we did not delete any edge incident with $v$ when creating $G_{T_G(u)\rightarrow v}$, it holds that $y$ is not adjacent to $v$ in $G$ either. Moreover, since both $u'$ and $y$ do not belong to $N_G(v)\setminus N_G(u)$, the edge $u'y$ is present in $G$. But then, by Definition~\ref{d-operation2}, the edge $u'y$ should have been deleted when $G$ was transformed into $G_{T_G(u)\rightarrow v}$. This yields the desired contradiction.
\qed
\end{proof}

\begin{lemma}\label{lem:twin}
Let $G$ be a triangle-free graph and let $u,v\in V(G)$ be two non-adjacent vertices. If $u$ and $v$ are not twins, then $\tau(G_{T_G(u)\rightarrow v})<\tau(G)$.
\end{lemma}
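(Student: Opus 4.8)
The plan is to show that the operation $G \mapsto G_{T_G(u)\rightarrow v}$ strictly merges two distinct twin classes without splitting any existing twin class, so the number of twin classes drops by at least one. First I would set $G' = G_{T_G(u)\rightarrow v}$ and observe the crucial fact that in $G'$, every vertex of $T_G(u)$ has the same open neighborhood as $v$, namely $N_G(v)$; this is immediate from Definition~\ref{d-operation2}, since each $u' \in T_G(u)$ loses exactly its neighbors in $N_G(u)\setminus N_G(v)$ and gains exactly the neighbors in $N_G(v)\setminus N_G(u)$, ending with neighborhood $N_G(v)$. In particular, since $u$ and $v$ are non-adjacent in $G$ and no edge incident with $v$ is modified, $N_{G'}(v) = N_G(v)$, so $T_G(u) \cup \{v\} \subseteq T_{G'}(v)$. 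Because $u$ and $v$ are not twins in $G$, the sets $T_G(u)$ and $T_G(v)$ are distinct twin classes of $G$, and in $G'$ they are contained in the single twin class $T_{G'}(v)$. Hence at least two twin classes of $G$ have been fused in $G'$.

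The remaining work is to argue that no twin class of $G$ gets broken apart by the operation, i.e., that the partition of $V(G')$ into twin classes is a coarsening of the partition of $V(G)$ into twin classes; combined with the fusion above this gives $\tau(G') < \tau(G)$. Concretely, I would show: if $a, b \in V(G)$ are twins in $G$, then they remain twins in $G'$. The proof splits on whether $a, b \in T_G(u)$ or not. If both lie in $T_G(u)$, then by the first paragraph both have neighborhood $N_G(v)$ in $G'$, so they are twins in $G'$. If neither lies in $T_G(u)$, then their neighborhoods are affected only through the modification of edges incident with vertices of $T_G(u)$: a vertex $w \notin T_G(u)$ gains the neighbor $a$ (equivalently, $b$) in $G'$ iff $a \in T_G(u)$—which is not the case—so in fact no edge incident with $a$ or $b$ is touched unless $a$ or $b$ itself lies in $T_G(u)$. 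Thus $N_{G'}(a) = N_G(a) = N_G(b) = N_{G'}(b)$. The only case needing a tiny bit of care is the mixed one, $a \in T_G(u)$ and $b \notin T_G(u)$: but then $N_G(a) = N_G(u)$, and since $a,b$ are twins, $N_G(b) = N_G(u)$ too, forcing $b \in T_G(u)$—contradiction, so this case is vacuous.

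The main obstacle, such as it is, is bookkeeping rather than depth: one must be careful that the operation is defined simultaneously on all of $T_G(u)$ (Definition~\ref{d-operation2}) so that the vertices of $T_G(u)$ do not interfere with each other's neighborhoods—in particular, since $G$ is triangle-free and $u \notin N_G(u)$, no vertex of $T_G(u)$ is adjacent to another vertex of $T_G(u)$, so the edges among $T_G(u)$ (there are none) play no role, and after the operation each $u' \in T_G(u)$ has neighborhood exactly $N_G(v)$, which contains no vertex of $T_G(u) \cup \{v\}$ since $v \notin N_G(v)$ and $N_G(u) \cap N_G(v) \subseteq N_G(v)$ with $u \notin N_G(v)$. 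With these observations in place, the inclusion $T_G(u) \cup T_G(v) \cup \{v\} \subseteq T_{G'}(v)$ together with the coarsening property yields $\tau(G') \le \tau(G) - 1 < \tau(G)$, completing the proof.
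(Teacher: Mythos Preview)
Your overall strategy---show that the twin partition of $G'=G_{T_G(u)\to v}$ is a strict coarsening of that of $G$---is exactly the paper's, and your handling of the merger $T_G(u)\cup T_G(v)\subseteq T_{G'}(v)$ is correct. However, the case ``neither $a$ nor $b$ lies in $T_G(u)$'' contains a genuine error. You assert that ``no edge incident with $a$ or $b$ is touched unless $a$ or $b$ itself lies in $T_G(u)$,'' and conclude $N_{G'}(a)=N_G(a)$. This is false: if $a\in N_G(u)\setminus N_G(v)$ then every edge $u'a$ with $u'\in T_G(u)$ is deleted, so $N_{G'}(a)=N_G(a)\setminus T_G(u)\neq N_G(a)$; symmetrically, if $a\in N_G(v)\setminus N_G(u)$ then $N_{G'}(a)=N_G(a)\cup T_G(u)$. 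Your argument only checks whether a vertex $w\notin T_G(u)$ gains $a$ as a neighbor; it overlooks precisely the edges between $a$ and $T_G(u)$, which are the only ones that can change.

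The fix is short and is what the paper does. Since $a$ and $b$ are twins in $G$, we have $u\in N_G(a)\Leftrightarrow u\in N_G(b)$ and $v\in N_G(a)\Leftrightarrow v\in N_G(b)$, so $a$ and $b$ lie in the same one of the four regions $N_G(u)\setminus N_G(v)$, $N_G(v)\setminus N_G(u)$, $N_G(u)\cap N_G(v)$, $V(G)\setminus(N_G(u)\cup N_G(v))$. In each region the operation modifies the neighborhoods of $a$ and $b$ identically (remove $T_G(u)$, add $T_G(u)$, or do nothing), whence $N_{G'}(a)=N_{G'}(b)$. With this correction your coarsening argument goes through and coincides with the paper's proof.
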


\begin{proof}
Suppose $u$ and $v$ are not twins. Then $T_G(u)$ and $T_G(v)$ are two different twin sets in $G$. By Definition~\ref{d-operation2}, the vertices of $T_G(u)\cup T_G(v)$ all belong to the same twin set in $G_{T_G(u)\rightarrow v}$, namely the twin set $T_{G_{T_G(u)\rightarrow v}}(u)=T_{G_{T_G(u)\rightarrow v}}(v)$. Let $x\in V(G)\setminus (T_G(u)\cup T_G(v))$. We prove that all the vertices in $T_G(x)$ belong to the same twin set in $G_{T_G(u)\rightarrow v}$, which implies that $\tau(G_{T_G(u)\rightarrow v})<\tau(G)$. 

Suppose there is a vertex $y\in T_G(x)$ such that $x$ and $y$ are not twins in $G_{T_G(u)\rightarrow v}$. Without loss of generality, suppose there is a vertex $z\in N_{G_{T_G(u)\rightarrow v}}(y)\setminus N_{G_{T_G(u)\rightarrow v}}(x)$. Since $x$ and $y$ are twins in $G$, we either have $xz,yz\in E(G)$ or $xz,yz\notin E(G)$. In the first case, the edge $xz$ is deleted from $G$ when $G_{T_G(u)\rightarrow v}$ is created, which implies that $x\in N_G(u)\setminus N_G(v)$ by Definition~\ref{d-operation2}. However, since $x$ and $y$ are twins in $G$, it holds that $y\in N_G(u)\setminus N_G(v)$ as well, implying that the edge $yz$ should not exist in $G_{T_G(u)\rightarrow v}$. This contradicts the definition of $z$. If $xz,yz\notin E(G)$, then we can use similar argument to conclude that $xz$ should be an edge in $G_{T_G(u)\rightarrow v}$, again yielding a contradiction.
\qed
\end{proof}

\section{Proof of Theorem~\ref{t-main}}

This section is devoted to proving Theorem~\ref{t-main}. We first prove that any triangle-free graph on $n$ vertices has at most $3^{n/3}$ maximal induced matchings. At the end of the section, we show why the bound in Theorem~\ref{t-main} is best possible.

A triangle-free graph on $n$ vertices that has more than $3^{n/3}$ maximal induced matchings is called a {\em counterexample}. For contradiction, let us assume that there exists a counterexample. Then there exists a counterexample $G$ such that for every counterexample $G'$, it holds that either $|V(G')|> |V(G)|$, or $|V(G')|=|V(G)|$ and $\tau(G')\geq \tau(G)$. Let $n=|V(G)|$. By definition of a counterexample, $|M_G|> 3^{n/3}$. We will prove a sequence of structural properties of $G$, and finally conclude that $G$ does not exist, yielding the desired contradiction.

\begin{lemma}
\label{l-basis}
$G$ is connected and has at least three vertices.
\end{lemma}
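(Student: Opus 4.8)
The plan is to derive both parts of the statement from the minimality of the counterexample $G$, using the reductions already developed. First I would establish that $G$ has at least three vertices: if $n\le 2$, then a triangle-free graph on at most two vertices has at most one maximal induced matching (either the empty matching, or the single edge if the two vertices are adjacent), and since $3^{n/3}\ge 1$ for $n\ge 0$, such a graph cannot be a counterexample. This also quietly rules out $n=0$, so from now on $n\ge 3$ and in particular $G$ has at least one vertex.

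For connectedness, I would argue by contradiction: suppose $G$ is disconnected, and write $G$ as the disjoint union of two nonempty graphs $G_1$ and $G_2$ on $n_1$ and $n_2$ vertices respectively, with $n_1+n_2=n$ and $n_1,n_2\ge 1$. Since induced matchings in a disjoint union are exactly unions of induced matchings in the components, and maximality is componentwise, we have the multiplicative identity $|M_G| = |M_{G_1}|\cdot|M_{G_2}|$. Now $|M_G| > 3^{n/3} = 3^{n_1/3}\cdot 3^{n_2/3}$ forces $|M_{G_i}| > 3^{n_i/3}$ for at least one $i\in\{1,2\}$; say it is $G_1$. Then $G_1$ is itself a triangle-free graph with more than $3^{n_1/3}$ maximal induced matchings, i.e.\ a counterexample, but $|V(G_1)| = n_1 < n = |V(G)|$, contradicting the choice of $G$ as a counterexample on the fewest vertices.

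The only mild subtlety is making sure the base case and the componentwise counting are handled cleanly — in particular that a single-vertex component or an edgeless component does not cause trouble in the product formula (it contributes a factor of exactly $1$, which is fine since $3^{1/3}>1$ absorbs it), and that ``maximal induced matching'' behaves multiplicatively across components, which follows directly from the definition since no edge of one component can extend a matching restricted to another. I do not expect a genuine obstacle here; this lemma is essentially a warm-up that lets later arguments assume $G$ is connected with enough vertices to speak about neighborhoods, twins, and the reduction operations of Definitions~\ref{d-operation1} and~\ref{d-operation2}.
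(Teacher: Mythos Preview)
Your argument is correct and essentially matches the paper's. The paper proves connectedness by splitting $G$ into all of its connected components $G_1,\ldots,G_k$, observing that none is a counterexample by minimality, and multiplying the bounds; you instead split into two nonempty pieces and extract a smaller counterexample, which is the contrapositive of the same idea. The paper's written proof does not explicitly treat the ``at least three vertices'' clause (leaving it as obvious), whereas you spell it out; your handling of that base case is fine.
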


\begin{proof}
First assume for contradiction that $G$ is not connected. Let $G_1, G_2, \ldots, G_k$ denote the connected components of $G$. By the choice of $G$, none of the connected components of $G$ is a counterexample. Hence $|M_{G_i}|\leq 3^{|V(G_i)|/3}$ for each $i\in \{1,\ldots,k\}$. But then $|\mim{G}|=\prod_{i=1}^{k} |\mim{G_i}|\leq 3^{n/3}$, contradicting the assumption that $G$ is a counterexample.
\qed
\end{proof}

\begin{lemma}
\label{l-twinlemma}
Let $u,v\in V(G)$. If there is no maximal induced matching in $G$ that covers both $u$ and $v$, then $u$ and $v$ are twins.
\end{lemma}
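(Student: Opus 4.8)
The plan is to argue by contradiction, exploiting the extremal choice of $G$ together with the three lemmas we have just established about the twin‑merging operation of Definition~\ref{d-operation2}. So suppose there are vertices $u,v\in V(G)$ such that no maximal induced matching of $G$ covers both $u$ and $v$, yet $u$ and $v$ are \emph{not} twins, and aim to reach a contradiction. The first thing I would record is that $u$ and $v$ must be non‑adjacent: if $uv$ were an edge, then $\{uv\}$ would be an induced matching, and every induced matching extends to a maximal one by greedily adding admissible edges; such a maximal induced matching would cover both $u$ and $v$, contradicting the hypothesis. This non‑adjacency is exactly what is needed for the operations $G_{T_G(u)\rightarrow v}$ and $G_{T_G(v)\rightarrow u}$ to be defined.

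Next I would invoke Lemma~\ref{l-morematchings}: since no maximal induced matching of $G$ covers both $u$ and $v$, at least one of $G_{T_G(u)\rightarrow v}$ and $G_{T_G(v)\rightarrow u}$ has at least $|M_G|$ maximal induced matchings. By the symmetry between $u$ and $v$, assume it is $H:=G_{T_G(u)\rightarrow v}$. The operation of Definition~\ref{d-operation2} leaves the vertex set unchanged, so $|V(H)|=n$; by Lemma~\ref{l-trianglefree}, $H$ is triangle‑free; and $|M_H|\ge |M_G|>3^{n/3}$. Hence $H$ is itself a counterexample, and it has exactly $n$ vertices, just like $G$.

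Finally, since $u$ and $v$ are not twins, Lemma~\ref{lem:twin} gives $\tau(H)<\tau(G)$. Thus $H$ is a counterexample on $n=|V(G)|$ vertices with strictly fewer twin sets than $G$, which contradicts the choice of $G$ as a counterexample with the smallest number of vertices and, among those, the smallest value of $\tau$. I do not expect any genuine obstacle in carrying this out; the only points needing a little care are the opening step — ruling out the edge $uv$ — and the bookkeeping that verifies $H$ genuinely qualifies as a counterexample (same number of vertices, still triangle‑free, still more than $3^{n/3}$ maximal induced matchings), so that the drop in $\tau$ can be turned against the extremality of $G$.
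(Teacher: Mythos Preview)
Your proof is correct and follows essentially the same route as the paper: both argue by contradiction, observe that $u$ and $v$ are non-adjacent, apply Lemma~\ref{l-morematchings} to obtain a triangle-free counterexample $H$ on the same vertex set with $|M_H|\ge|M_G|$, and then use Lemma~\ref{lem:twin} to get $\tau(H)<\tau(G)$, contradicting the extremal choice of $G$. If anything, your version is slightly more explicit than the paper's in two places --- you spell out why $uv\notin E(G)$ and you state up front the assumption that $u$ and $v$ are not twins (which the paper only uses implicitly when it invokes Lemma~\ref{lem:twin}).
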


\begin{proof}
Suppose there is no maximal induced matching in $G$ that covers both $u$ and $v$. In particular, this implies that $u$ and $v$ are not adjacent. Let $G'=G_{T_G(u)\rightarrow v}$ and $G''=G_{T_G(v)\rightarrow u}$. By Lemma~\ref{l-morematchings}, we have that $|M_{G'}|\geq |M_G|$ or $|M_{G''}|\geq |M_G|$. Without loss of generality, suppose $|M_{G'}|\geq |M_G|$. The graph $G'$ is triangle-free due to Lemma~\ref{l-trianglefree}. This, together with the fact that $|M_{G'}|\geq |M_G|>3^{n/3}$, implies that $G'$ is a counterexample. But by Lemma~\ref{lem:twin}, it holds that $\tau(G')<\tau(G)$, which contradicts the choice of $G$.
\qed
\end{proof}

\begin{lemma}\label{exc.inc.upperbound}
For every edge $uv\in E(G)$ and every set $X\subseteq V(G)\setminus \{u,v\}$, it holds that $|\mim{G}(X, \{u,v\})|\le 3^{(n-|X\cup N[\{u,v\}]|)/3}$.
\end{lemma}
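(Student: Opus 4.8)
The plan is to reduce the maximal induced matchings of $G$ that are counted by $\mim{G}(X,\{u,v\})$ to the maximal induced matchings of a strictly smaller induced subgraph, and then to invoke the minimality of the counterexample~$G$.

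First I would note that every $M\in\mim{G}(X,\{u,v\})$ necessarily contains the edge $uv$ itself. Indeed, $M$ covers both $u$ and $v$, and $uv\in E(G)$; if $M$ matched $u$ to some vertex other than $v$, then $u$ would have two neighbours (namely $v$ and its partner) in the subgraph induced by the endpoints of $M$, contradicting the fact that this subgraph is $1$-regular. So $uv\in M$ for every such~$M$.

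Next, put $W=X\cup N_G[\{u,v\}]$ and $H=G-W$. Since $uv\in E(G)$ we have $|W|\ge |\{u,v\}|=2$, hence $|V(H)|=n-|W|<n$, and $H$ is triangle-free as an induced subgraph of the triangle-free graph~$G$. By the minimality of $G$ in the number of vertices among all counterexamples, $H$ is not a counterexample, so $|\mim{H}|\le 3^{|V(H)|/3}=3^{(n-|X\cup N_G[\{u,v\}]|)/3}$. The heart of the argument is then to show that $M\mapsto M\setminus\{uv\}$ is an injection from $\mim{G}(X,\{u,v\})$ into $\mim{H}$. Injectivity is immediate. For a fixed $M$, all edges of $M\setminus\{uv\}$ avoid $N_G[\{u,v\}]$ (because $M$ is induced and contains $uv$) and avoid $X$ (because $M$ covers no vertex of $X$); hence $M\setminus\{uv\}$ is an induced matching of $H$. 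To see it is maximal in $H$, suppose some edge $e$ of $H$ extended $M\setminus\{uv\}$ to a larger induced matching of $H$. Since $V(H)=V(G)\setminus N_G[\{u,v\}]$ contains no neighbour of $u$ or $v$, the edge $e$ is non-adjacent to $u$ and $v$, so $M\cup\{e\}$ would be an induced matching of $G$ properly containing $M$, contradicting the maximality of $M$ in $G$. Combining the injection with the bound on $|\mim{H}|$ yields the desired inequality.

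I expect the only delicate point to be this maximality-transfer step: verifying that an extending edge inside $H$ genuinely produces an extending edge in $G$, which relies precisely on the absence of edges between $\{u,v\}$ and $V(H)$. The counting, the triangle-freeness of $H$, and the applicability of the minimality of $G$ are all routine once that observation is in place.
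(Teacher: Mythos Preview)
Your proposal is correct and follows essentially the same argument as the paper: both show that $M\mapsto M\setminus\{uv\}$ is an injection from $\mim{G}(X,\{u,v\})$ into $\mim{H}$ with $H=G-(X\cup N_G[\{u,v\}])$, and then apply the minimality of $G$ to bound $|\mim{H}|$. One small slip: you write $V(H)=V(G)\setminus N_G[\{u,v\}]$ where it should be $V(G)\setminus(X\cup N_G[\{u,v\}])$, but since the only property you use is that $V(H)$ contains no neighbour of $u$ or $v$, the argument is unaffected.
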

\begin{proof}
Let $G'=G-(X\cup N_G[\{u,v\}])$. We first show that for every matching $M\in M_G(X,\{u,v\})$, it holds that $M\setminus \{uv\} \in M_{G'}$. Let $M\in M_G(X, \{u,v\})$. Since $uv\in E(G)$ and  $M$ covers both $u$ and $v$, the edge $uv$ belongs to $M$. Since $M$ does not cover any vertex in $X$, it is clear that the set $M'=M\setminus\{uv\}$ is an induced matching in $G'$. We show that $M'$ is maximal. For contradiction, suppose there exists an edge $xy\in E(G')$ such that $M'\cup \{xy\}$ is an induced matching in $G'$. Since neither $x$ nor $y$ belongs to the set $X\cup N_G[\{u,v\}]$, we have in particular that there is no edge between the sets $\{x,y\}$ and $\{u,v\}$. Hence, adding the edge $xy$ to $M$ yields an induced matching in $G$, contradicting the assumption that $M$ is a maximal induced matching in~$G$. 

We now know that for every matching $M\in M_G(X,\{u,v\})$, it holds that $M\setminus \{uv\} \in M_{G'}$. Note that, for any two matchings $M_1,M_2\in M_G(X, \{u,v\})$ with $M_1\neq M_2$, the sets $M_1\setminus \{uv\}$ and $M_2\setminus \{uv\}$ are not equal, as both $M_1$ and $M_2$ contain the edge $uv$. Hence we have that $|M_G(X,\{u,v\})| \leq |M_{G'}|$. Since $G'$ has less vertices than $G$ and is thus not a counterexample, we have that $|\mim{G'}|\le 3^{|V(G')|/3}=3^{(n-|X\cup N[\{u,v\}]|)/3}$. We conclude that $|\mim{G}(X, \{u,v\})|\le |M_{G'}|\leq  3^{(n-|X\cup N[\{u,v\}]|){3}}$.
\qed
\end{proof}

\begin{lemma}\label{degree1}
$G$ has no vertex of degree less than~$2$.
\end{lemma}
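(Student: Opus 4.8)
The plan is to argue by contradiction, so suppose $G$ has a vertex of degree less than~$2$. Since $G$ is connected and has at least three vertices by Lemma~\ref{l-basis}, it has no isolated vertex, hence $G$ must contain a vertex $v$ of degree exactly~$1$; let $u$ denote its unique neighbor. For the same reason (connectivity, at least three vertices, $v$ a leaf), the vertex $u$ has a neighbor different from $v$, so $\deg_G(u)\ge 2$.

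The heart of the argument is to show that \emph{every} neighbor of $u$ is a leaf. Fix any $w\in N_G(u)\setminus\{v\}$. I claim that no maximal induced matching of $G$ covers both $v$ and $w$. Indeed, since $uv$ is the only edge incident with $v$, any matching covering $v$ contains $uv$ and therefore also covers $u$; if such a matching covered $w$ as well, say via an edge $ww'$, then in the subgraph induced by the endpoints of the matching the vertex $u$ would be adjacent both to $v$ and to $w$, contradicting $1$-regularity. Applying Lemma~\ref{l-twinlemma} to the pair $v,w$, we conclude that $v$ and $w$ are twins, so $N_G(w)=N_G(v)=\{u\}$ and $w$ is a leaf.

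Hence $u$ together with its $d:=\deg_G(u)\ge 2$ neighbors, all of them leaves, induces the star $K_{1,d}$; moreover, connectivity of $G$ forces $V(G)=N_G[u]$, so $G=K_{1,d}$ and $n=d+1$. In $K_{1,d}$ any two edges meet at $u$, so its maximal induced matchings are precisely the $d$ single-edge matchings. Therefore $|\mim{G}|=d=n-1$, which contradicts $|\mim{G}|>3^{n/3}$ since $3^{n/3}>n-1$ for every integer $n\ge 3$. This contradiction proves the lemma.

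The main obstacle I anticipate is phrasing the ``no maximal induced matching covers both $v$ and $w$'' step carefully enough to invoke the \emph{induced} matching condition (and not merely the matching condition) correctly, and, secondarily, spelling out why connectivity really does rule out any vertex outside $N_G[u]$ — a shortest path from such a vertex to $u$ would have to pass through a leaf neighbor of $u$, which is impossible.
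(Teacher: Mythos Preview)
Your proof is correct and follows essentially the same approach as the paper's: both hinge on applying Lemma~\ref{l-twinlemma} to the leaf $v$ and a neighbor $w\in N(u)\setminus\{v\}$, and both dispose of the star case by the direct count $|\mim{K_{1,d}}|=d=n-1\le 3^{n/3}$. The only cosmetic difference is in the organization: the paper first handles the star case and then, assuming $G$ is not a star, picks a single $w\in N(u)$ with $d(w)\ge 2$ and derives an immediate contradiction from the twin conclusion (since $d(v)<d(w)$); you instead apply the twin argument to \emph{every} $w\in N(u)\setminus\{v\}$ to force $G$ to be a star, and only then count.
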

\begin{proof}
By Lemma~\ref{l-basis}, the graph $G$ is connected and $n\geq 3$. Hence, $G$ has no vertices of degree~$0$. Assume for contradiction that $G$ contains a vertex $v$ with $d(v)=1$. Let $u$ be the unique neighbor of $v$. If $G$ is a star, then $M_{G}=E(G)$, implying that $|M_G|=n-1\leq 3^{n/3}$. Since this contradicts the fact that $G$ is a counterexample, we infer that $G$ is not a star. Since $G$ is connected and triangle-free, $u$ has a neighbor $w$ with $d(w)\geq 2$. Note that there is no maximal induced matching in $G$ that covers both $v$ and $w$. Then $u$ and $w$ must be twins due to Lemma~\ref{l-twinlemma}. This is a contradiction, as $d(v)<d(w)$ implies that $v$ and $w$ cannot be twins.
\qed \end{proof}

\begin{lemma}\label{2degree2in5cycle}
$G$ has no $5$-cycle containing two non-adjacent vertices of degree~$2$.
\end{lemma}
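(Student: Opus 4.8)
The plan is to argue by contradiction: suppose $G$ has a $5$-cycle $C = v_1v_2v_3v_4v_5$ in which two non-adjacent vertices, say $v_1$ and $v_3$, both have degree $2$. Since $G$ is triangle-free, the edges of $C$ are exactly the edges among $\{v_1,\dots,v_5\}$, and since $\deg(v_1)=\deg(v_3)=2$, we have $N_G(v_1)=\{v_2,v_5\}$ and $N_G(v_3)=\{v_2,v_4\}$. The key observation to establish first is that no maximal induced matching of $G$ can cover both $v_1$ and $v_3$: any edge of $M$ covering $v_1$ is either $v_1v_2$ or $v_1v_5$, and any edge covering $v_3$ is either $v_2v_3$ or $v_3v_4$; the only non-conflicting choice would be $v_1v_5$ together with $v_3v_4$, but then $v_4$ and $v_5$ are both covered while the edge $v_4v_5\in E(G)$ joins them, so the matching is not induced. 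Hence $M_G(\emptyset,\{v_1,v_3\})=\emptyset$.

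Next I would apply Lemma~\ref{l-twinlemma}: since no maximal induced matching of $G$ covers both $v_1$ and $v_3$, these two vertices must be twins in $G$, i.e.\ $N_G(v_1)=N_G(v_3)$. But $N_G(v_1)=\{v_2,v_5\}$ and $N_G(v_3)=\{v_2,v_4\}$, and $v_5\neq v_4$ (they are distinct vertices of the $5$-cycle, and $v_4v_5$ is an edge so in particular $v_4\neq v_5$). This contradicts $v_1$ and $v_3$ being twins, completing the proof.

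I expect the only real point requiring care is the first step — verifying rigorously that no maximal induced matching covers both $v_1$ and $v_3$ — and there the subtlety is purely the small case analysis on which of the (at most two) edges at $v_1$ and at $v_3$ lie in the matching, using triangle-freeness to pin down that the degree-$2$ neighborhoods are exactly the cycle neighbors and using the edge $v_4v_5$ to rule out the surviving combination. Everything after that is an immediate invocation of Lemma~\ref{l-twinlemma} and a comparison of two $2$-element neighborhoods, so there is no genuine obstacle beyond bookkeeping.
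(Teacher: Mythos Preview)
Your proof is correct and follows exactly the paper's approach: show that no maximal induced matching covers both degree-$2$ vertices on the $5$-cycle, then invoke Lemma~\ref{l-twinlemma} to force them to be twins, which they visibly are not. One tiny slip in the write-up: there are three vertex-disjoint pairs $(e_1,e_3)$ with $e_1\in\{v_1v_2,v_1v_5\}$ and $e_3\in\{v_2v_3,v_3v_4\}$, not just $\{v_1v_5,v_3v_4\}$, but each is ruled out identically (some cycle edge joins their endpoints), so the argument goes through unchanged.
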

\begin{proof}
For contradiction, suppose there is a 5-cycle containing two non-adjacent vertices $u$ and $v$ such that $d(u)=d(v)=2$. Clearly, the vertices $u$ and $v$ are not twins, and there is no maximal induced matching in $G$ that covers both $u$ and $v$. This contradicts Lemma~\ref{l-twinlemma}.
\qed
\end{proof}

\begin{lemma}\label{1degree2inc4}
$G$ has no $4$-cycle containing exactly one vertex of degree~$2$.
\end{lemma}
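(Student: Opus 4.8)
The plan is to derive a contradiction with Lemma~\ref{l-twinlemma} by producing a pair of non-twin vertices of $G$ that are never covered together by a maximal induced matching. Suppose $G$ has a $4$-cycle containing exactly one vertex of degree~$2$. Exploiting the symmetry of the $4$-cycle, I would name the cycle $abcd$ so that $d(a)=2$; its diagonal partner $c$ lies on the cycle, hence $d(c)\ge 2$, and $c$ is not the unique degree-$2$ vertex of the cycle, so $d(c)\ge 3$. Since $a$ is on the cycle, $N_G(a)=\{b,d\}$, and since $\{b,d\}\subseteq N_G(c)$ while $|N_G(c)|\ge 3$, the vertices $a$ and $c$ are not twins.

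The key step is to verify that no maximal induced matching $M$ of $G$ covers both $a$ and $c$. Suppose $M$ covers $a$. As $N_G(a)=\{b,d\}$, the matching $M$ contains $ab$ or $ad$; by symmetry say $ab\in M$. If $M$ also covered $c$, say through an edge $cy\in M$, then $y\neq b$ (otherwise $M$ would contain the two edges $ab$ and $bc$, which share the endpoint $b$), so $b$ and $c$ would both be covered by $M$ while the cycle edge $bc$ is not in $M$ --- contradicting that $M$ is induced. This is the only place the $4$-cycle structure is used, and it is the part I expect to feel like a small ``trick'': covering $a$ inevitably puts a matching edge on $b$ or on $d$, and that edge shields $c$ through the common neighbour.

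Once this is established, Lemma~\ref{l-twinlemma} forces $a$ and $c$ to be twins, contradicting what we showed in the first paragraph, and the proof is complete. I do not anticipate any genuine obstacle beyond spotting the pair $\{a,c\}$; note in particular that the argument uses only $d(a)=2$ and $d(c)\ge 3$, i.e.\ that the $4$-cycle has a degree-$2$ vertex whose diagonal partner has higher degree.
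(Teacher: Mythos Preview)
Your proof is correct and follows essentially the same approach as the paper: you pick the degree-$2$ vertex and its diagonal partner on the $4$-cycle, observe they are not twins (different degrees), and argue no maximal induced matching covers both, contradicting Lemma~\ref{l-twinlemma}. The paper's proof asserts these two facts without justification, whereas you spell out why covering $a$ forces a matching edge on $b$ or $d$ and hence blocks $c$; this is exactly the intended argument.
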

\begin{proof}
Assume, for contradiction, that there exists a $4$-cycle $C=uvwx$ such that $d(u)=2$ and the other vertices of $C$ have degree more than~$2$. Then $u$ and $w$ are not twins, and there is no maximal induced matching in $G$ that covers both $u$ and $w$. This contradicts Lemma~\ref{l-twinlemma}. 
\qed
\end{proof}

\begin{lemma}\label{2degree2adjacent}
$G$ has no two adjacent vertices of degree $2$.
\end{lemma}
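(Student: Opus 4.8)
The plan is to assume for contradiction that $G$ contains adjacent vertices $u,v$ with $d_G(u)=d_G(v)=2$. Write $N_G(u)=\{v,a\}$ and $N_G(v)=\{u,b\}$; since $G$ is triangle-free, $a\neq b$. First I would dispose of the case $a\sim b$. Here one checks directly that no maximal induced matching of $G$ covers both $a$ and $v$: to cover $v$ one must put $uv$ or $vb$ into the matching, and in either case every edge that could additionally cover $a$ creates, among the matched endpoints, a vertex of degree $\ge 2$ in the subgraph they induce---namely $u$ (using $u\sim v$) in the first case, and $a$ (using $a\sim b$) in the second---contradicting $1$-regularity. By Lemma~\ref{l-twinlemma}, $a$ and $v$ are then twins, so $N_G(a)=N_G(v)=\{u,b\}$ and in particular $d_G(a)=2$; by the symmetric argument $d_G(b)=2$, so $\{u,v,a,b\}$ induces a $4$-cycle all of whose vertices have degree $2$. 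By Lemma~\ref{l-basis}, $G=C_4$, but $|M_{C_4}|=4<3^{4/3}$, contradicting the choice of $G$.

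So from now on I assume $a\not\sim b$, and count $M_G$ according to how a maximal induced matching $M$ meets $\{u,v\}$. Since $uv\in E(G)$: if $M$ covers both $u$ and $v$ then $uv\in M$; if $M$ covers $u$ but not $v$ then $ua\in M$ (the edges $ua$ and $vb$ cannot both lie in an induced matching, as $u$ would then have degree $2$ in the subgraph induced by the matched endpoints); symmetrically $vb\in M$ if $M$ covers $v$ but not $u$; and if $M$ covers neither $u$ nor $v$, then since $a,b$ are the only neighbours of $\{u,v\}$ outside $\{u,v\}$, maximality of $M$ forces $a$ or $b$ to be covered. This gives a partition $M_G=P_1\uplus P_2\uplus P_3\uplus P_4\uplus P_5$, where $P_1,P_2,P_3$ are the matchings containing $uv$, $ua$, $vb$ respectively, $P_4$ the matchings that avoid $u,v$ and cover $a$ (via some $ax$ with $x\in N_G(a)\setminus\{u\}$), and $P_5$ the matchings that avoid $u,v,a$ and cover $b$. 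Lemma~\ref{exc.inc.upperbound} gives $|P_1|\le 3^{(n-4)/3}$, $|P_2|\le 3^{(n-d_G(a)-2)/3}$, $|P_3|\le 3^{(n-d_G(b)-2)/3}$, and, splitting $P_4$ over its covering edge $ax$ and using that adjacent vertices of a triangle-free graph have disjoint neighbourhoods (so $|N_G[\{a,x\}]|\ge d_G(a)+d_G(x)$, with $v\notin N_G[\{a,x\}]$), $|P_4|\le\sum_{x\in N_G(a)\setminus\{u\}}3^{(n-d_G(a)-d_G(x)-1)/3}$, and symmetrically for $P_5$.

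Summing these, dividing by $3^{n/3}$, and using $d_G(x)\ge 2$, the desired bound $|M_G|\le 3^{n/3}$ reduces to $3^{-4/3}+h(d_G(a))+h(d_G(b))\le 1$, where $h(d)=3^{-(d+2)/3}+(d-1)\,3^{-(d+3)/3}$ attains its maximum over integers $d\ge 2$ at $d=2$. A short computation shows this holds---contradicting that $G$ is a counterexample---unless $\{d_G(a),d_G(b)\}$ equals $\{2,2\}$ or $\{2,3\}$.

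These two residual configurations are where I expect the real work to lie; handling them is the main obstacle. In each, some neighbour of $u$ or $v$---say $a$---has degree $2$, so $a,u$ is another pair of adjacent degree-$2$ vertices, and iterating this observation shows that $G$ contains a maximal path $p_0p_1\cdots p_kp_{k+1}$ with $p_1,\dots,p_k$ of degree $2$ ($k\ge 2$) and $p_0,p_{k+1}$ of degree $\ge 3$, unless this path closes into a cycle. If it closes into a cycle whose vertices all have degree $2$, then $G=C_n$, impossible since $|M_{C_n}|<3^{n/3}$ for $n\ge 4$; a cycle with a unique vertex of degree $\ge 3$ is ruled out via the case $a\sim b$ already treated (cycle length $4$), via Lemma~\ref{2degree2in5cycle} (length $5$), or by the same counting as below (longer cycles). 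In the non-cyclic case one reruns the partition-and-counting argument above on a well-chosen pair of adjacent degree-$2$ vertices near an endpoint of the path, now substituting the exact degrees of the relevant outer neighbours into the bounds for $P_2,\dots,P_5$, which saves a factor $3^{-1/3}$ on the critical terms; and a long run of internal degree-$2$ vertices is precluded because three consecutive ones put two of them at distance $2$, so by Lemma~\ref{l-twinlemma} either they are twins---recreating the $4$-cycle of the first paragraph---or some maximal induced matching covers both, which one exploits together with Lemma~\ref{1degree2inc4}. Assembling these sub-cases yields the contradiction in all remaining cases.
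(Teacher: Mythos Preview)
Your five-way partition and the bounds on $P_1,\dots,P_5$ are correct, and so is your identification of the leftover cases $\{d_G(a),d_G(b)\}\in\{\{2,2\},\{2,3\}\}$. The genuine gap is in how you dispose of those two cases. The path-of-degree-$2$-vertices argument does not close: for three consecutive degree-$2$ vertices $p_1,p_2,p_3$ on such a path with $p_0,p_4$ distinct and non-adjacent, the pair $\{p_0p_1,\,p_3p_4\}$ \emph{is} an induced matching, so a maximal induced matching covering both $p_1$ and $p_3$ certainly exists, Lemma~\ref{l-twinlemma} yields no twin conclusion, and your subsequent appeal to Lemma~\ref{1degree2inc4} has no visible connection to this situation. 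Likewise, ``saving a factor $3^{-1/3}$ by substituting exact degrees near an endpoint'' is never carried out, and choosing the pair $(p_1,p_2)$ still leaves the outer neighbour $p_3$ of degree~$2$, so $h$ is evaluated at~$2$ on one side and the inequality still overshoots.

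The fix is far simpler than your path digression and is essentially what the paper does. The asymmetry you already built into $P_5$ (it excludes $a$ as well as $u,v$) is the missing ingredient: for $y\in N_G(b)\setminus\{v\}$, if $a\sim y$ then $auvby$ is a $5$-cycle on which the degree-$2$ vertices $u,v$ have non-adjacent partners among $\{a,b,y\}$; whenever $\min(d_G(a),d_G(b))=2$ this contradicts Lemma~\ref{2degree2in5cycle}, so $a\notin N_G[\{b,y\}]$ and your $P_5$-bound improves to $(d_G(b)-1)\,3^{(n-d_G(b)-4)/3}$. With this single sharpening your inequality drops below~$1$ in all residual cases. The paper reaches the same endpoint more cleanly by partitioning $M_G$ as $M(\emptyset,\{a\})\uplus M(\{a\},\{b\})\uplus M(\{a,b\},\emptyset)$ (branching on coverage of $a$ and $b$ rather than of $u$ and $v$); the middle term already carries the exclusion of $a$, so the same $5$-cycle observation makes the bound work uniformly for all $d_G(a),d_G(b)\ge 2$, with no leftover cases to patch.
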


\begin{proof}
For contradiction, suppose there are two vertices $u$ and $v$ such that $d(u)=d(v)=2$ and $uv\in E(G)$. Let $a$ and $b$ denote the other neighbors of $u$ and $v$, respectively. Since $G$ is triangle-free, we have that $a\neq b$. We first show that $ab\notin E(G)$. For contradiction, assume that $ab\in E(G)$ and both $a$ and $b$ have degree~$2$. Then $G$ is isomorphic to $C_4$, implying that $|\mim{G}|=4\le 3^{4/3}$. This contradicts the fact that $G$ is a counterexample. Hence $a$ or $b$ has degree more than~$2$. Assume without loss of generality that $d(a)\ge 3$. Then $a$ and $v$ are not twins, and there is no matching in $M_G$ covering both $a$ and $v$. This contradiction to Lemma~\ref{l-twinlemma} implies that $ab\notin E$.

We now partition $\mim{G}$ into three sets $M(\emptyset, \{a\})$, $M(\{a\},\{b\})$, and $M(\{a,b\},\emptyset)$,  and find an upper bound on the size of each of these sets.

We first consider $M(\emptyset, \{a\})$. It is clear that $|M(\emptyset,\{a\})|=\sum_{p\in N(a)}|M(\emptyset,\{a,p\})|$. Let $p=u$. Since $|N[\{a,u\}]|=d(a)+2$, from Lemma~\ref{exc.inc.upperbound} we have $|M(\emptyset, \{a,u\})|\le 3^{(n-(d(a)+2))/{3}}$. Now consider the case that $p\neq u$. In this case, $|N[\{a,p\}]|=d(a)+d(p)$ and $d(p)\ge 2$ due to Lemma~\ref{degree1}, and thus Lemma~\ref{exc.inc.upperbound} implies $|M(\emptyset, \{a,p\})|\le 3^{(n-(d(a)+2))/{3}}$. Consequently, we obtain 
$$|M(\emptyset, \{a\})|=\sum_{p\in N(a)}|M(\emptyset,\{a,p\})|\le d(a) \cdot 3^{\frac{n-(d(a)+2)}{3}}.$$

We now find an upper bound on $|M(\{a\},\{b\})|$. Since no matching in $M(\{a\},\{b\})$ covers $u$, it holds that $M(\{a\},\{b\}) = M(\{a,u\},\{b\})$. Observe that $|M(\{a,u\},\{b\})|=\sum_{q\in N(b)}|M(\{a,u\},\{b,q\})|$. If $q=v$, then $|M(\{a,u\}, \{b,v\})|\le 3^{(n-(d(b)+3))/{3}}$ due to Lemma~\ref{exc.inc.upperbound} and the fact that $d(v)=2$ and $a\notin N[\{b,v\}]$. Let now $q\neq v$. First suppose $q$ is adjacent to $a$. Then $qauvb$ is a $5$-cycle, and hence Lemma~\ref{2degree2in5cycle} implies that $d(q)\geq 3$. Consequently, $|N[\{b,q\}]|=d(b)+d(q)\geq d(b)+3$, and since $u\notin N[\{b,q\}]$, we find that $|M(\{a,u\},\{b,q\})|\leq 3^{(n-(d(b)+4))/3}$ due to Lemma~\ref{exc.inc.upperbound}. Now suppose that $q$ is not adjacent to $a$. Then $N[\{b,q\}]$ contains neither $a$ nor $u$. Hence, Lemma~\ref{exc.inc.upperbound} and the fact that $|N[\{b,q\}]\geq d(b)+2$ imply that $|M(\{a,u\}, \{b,q\})|\le 3^{(n-(d(b)+4))/{3}}$. We conclude that
$$|M(\{a\}, \{b\})|\le  3^{\frac{n-(d(b)+3)}{3}} + (d(b)-1) \cdot 3^{\frac{n-(d(b)+4)}{3}} \, .$$

Finally, we consider $M(\{a,b\},\emptyset)$. Every matching in $M(\{a,b\},\emptyset)$ is maximal and covers neither $a$ nor $b$, so it must contain edge $uv$. Hence, $M(\{a,b\},\emptyset)=M(\{a,b\}, \{u,v\})$. Since $|N[\{u,v\}]|=4$, Lemma~\ref{exc.inc.upperbound} gives
$$|M(\{a,b\},\emptyset)|=|M(\{a,b\},\{u,v\})|\le 3^{\frac{n-4}{3}}.$$

Combining the obtained upper bounds, we find that
\begin{equation*}
\label{eq:case2}
|\mim{G}|\le f(d(a),d(b))\cdot 3^{\frac{n}{3}} \, ,
\end{equation*}
where the function $f$ is defined as follows:
\begin{equation*}
f(d(a),d(b)) = d(a) \cdot 3^{-\frac{d(a)+2}{3}}+ 3^{-\frac{d(b)+3}{3}}+  (d(b) -1) \cdot 3^{-\frac{d(b)+4}{3}} + 3^{-\frac{4}{3}} \, .
\end{equation*}
Recall that both $a$ and $b$ have degree at least~$2$ due to Lemma~\ref{degree1}. We observe that $f(2,2)<0.965$, yielding an upper bound of $0.965 \cdot 3^{n/3}$ on $|M_G|$ in case $d(a)=d(b)=2$. Now consider the case where $d(a)=2$ and $d(b)\geq 3$. Then the function $f$ is decreasing with respect to $d(b)$. Since $f(2,3)<0.959$, we find that $|M_G|< 0.959 \cdot 3^{n/3}$ in this case. By using similar arguments, we find that $|M_G|< 0.984 \cdot 3^{n/3}$ when $d(b)=2$ and $d(a)\geq 3$. Finally, when both $d(a)\geq 3$ and $d(b)\geq 3$, then the function $f$ is decreasing with respect to both variables $d(a)$ and $d(b)$ and is maximum when $d(a)=d(b)=3$. Since $f(3,3)<0.978$, we find that $|M_G|<0.978 \cdot 3^{n/3}$ whenever $d(a)\geq 3$ and $d(b)\geq 3$. Summarizing, we obtain a contradiction to the assumption that $|\mim{G}| > 3^{n/3}$ in each case, which completes the proof of this case.
\qed
\end{proof}

\begin{lemma}\label{degree2}
Let $u\in V(G)$. If $u$ has degree~$2$, then both its neighbors have degree~$3$.
\end{lemma}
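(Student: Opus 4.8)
The plan is to assume the conclusion fails and derive a contradiction with $|\mim{G}|>3^{n/3}$ by bounding $|\mim{G}|$ strictly below $3^{n/3}$. Let $u$ be a degree-$2$ vertex with neighbours $a$ and~$b$. By Lemma~\ref{degree1} we have $d(a),d(b)\ge 2$, and by Lemma~\ref{2degree2adjacent} neither $a$ nor $b$ can have degree~$2$, since each is adjacent to the degree-$2$ vertex~$u$; hence $d(a),d(b)\ge 3$. Suppose, for contradiction, that not both have degree~$3$; without loss of generality $d(a)\ge 4$.

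The key step is to branch on how a maximal induced matching meets the two edges $ua,ub$ at~$u$. Since these are the only edges incident with~$u$, we have the partition
$$\mim{G}=\{M\in\mim{G}\mid ua\in M\}\ \uplus\ \{M\in\mim{G}\mid ub\in M\}\ \uplus\ \mim{G}(\{u\},\emptyset).$$
If $ua\in M$ then $M$ covers $u$ and cannot also cover its neighbour $b$, so $\{M\in\mim{G}\mid ua\in M\}=\mim{G}(\{b\},\{u,a\})$; as $G$ is triangle-free and $b\in N_G(u)$ we get $\{b\}\cup N_G[\{u,a\}]=N_G[\{u,a\}]$ with $|N_G[\{u,a\}]|=d(u)+d(a)=d(a)+2$, so Lemma~\ref{exc.inc.upperbound} bounds this set by $3^{(n-d(a)-2)/3}\le 3^{(n-6)/3}$. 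Symmetrically, $|\{M\in\mim{G}\mid ub\in M\}|\le 3^{(n-d(b)-2)/3}\le 3^{(n-5)/3}$.

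For the third set I would observe that a maximal induced matching of $G$ that does not cover $u$ is a maximal induced matching of $G-u$: it is clearly an induced matching there, and any edge of $G-u$ that enlarged it in $G-u$ would also enlarge it in~$G$, contradicting maximality. Since $G-u$ is a triangle-free graph on $n-1$ vertices it is not a counterexample, whence $|\mim{G}(\{u\},\emptyset)|\le|\mim{G-u}|\le 3^{(n-1)/3}$. Adding the three bounds yields
$$|\mim{G}|\le 3^{(n-6)/3}+3^{(n-5)/3}+3^{(n-1)/3}=\bigl(3^{-2}+3^{-5/3}+3^{-1/3}\bigr)\,3^{n/3}<3^{n/3},$$
the desired contradiction; interchanging $a$ and $b$ handles the case $d(b)\ge 4$.

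There is no real analytic difficulty here — the numerical inequality is immediate — so the only thing that has to be got right is the choice of branching vertex. Branching on the high-degree vertex~$a$, or on $a$ and $b$ simultaneously in the style of Lemma~\ref{2degree2adjacent}, leaves a term counting the matchings that avoid both $a$ and $b$ which is too large to bring the coefficient of $3^{n/3}$ below~$1$. Branching instead on the \emph{low}-degree vertex~$u$ is what works: the two ``$u$ covered'' classes are then controlled by the large quantities $d(a)$ and $d(b)$, while the ``$u$ uncovered'' class can be absorbed wholesale into $G-u$ with plenty of room to spare.
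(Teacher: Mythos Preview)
Your proof is correct and follows essentially the same approach as the paper: both branch on the degree-$2$ vertex $u$, partitioning $\mim{G}$ into the matchings containing $ua$, those containing $ub$, and those not covering $u$, then bound the first two pieces via Lemma~\ref{exc.inc.upperbound} and the third via the minimality of $G$ applied to $G-u$. The resulting numerical inequality $3^{-1/3}+3^{-5/3}+3^{-2}<1$ is identical to the paper's, and your closing remark about why one must branch on $u$ rather than on the high-degree neighbour is a helpful piece of intuition not made explicit in the paper.
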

 
\begin{proof}
Suppose $u$ has degree~$2$, and let $N(u)=\{v,w\}$. Then $d(v)\ge 3$ and $d(w)\ge 3$ due to Lemmas~\ref{degree1} and~\ref{2degree2adjacent}. For contradiction, suppose one of the neighbors of $u$, say $w$, has degree more than~$3$. Note that $M_G=M(\{u\},\emptyset) \uplus M(\emptyset,\{u,v\})\uplus M(\emptyset,\{u,w\})$. Since $M(\{u\},\emptyset)=\mim{G-u}$ and $G-u$ is not a counterexample due to the choice of $G$, we have $|M(\{u\},\emptyset)|\le 3^{(n-1)/{3}}$. Since $|N[\{u,v\}]|=d(v)+2$, we can use Lemma~\ref{exc.inc.upperbound} to deduce that $|M(\emptyset,\{u,v\})|\leq 3^{(n-(d(v)+2))/{3}}$ and $|M(\emptyset,\{u,w\})|\le 3^{(n-(d(w)+2))/{3}}$. Hence we find that 
\[ |M_G|\leq 3^{\frac{n-1}{3}} + 3^{\frac{n-(d(v)+2)}{3}} + 3^{\frac{n-(d(w)+2)}{3}} \, .\]
Since $d(v)\ge 3$ and $d(w)\ge 4$, we get 
$$|\mim{G}|\le 3^{\frac{n-1}{3}}+3^{\frac{n-5}{3}}+ 3^{\frac{n-6}{3}}=\big{(}3^{-\frac{1}{3}}+3^{-\frac{5}{3}}+3^{-\frac{6}{3}} \big{)} \cdot 3^{\frac{n}{3}} < 3^{\frac{n}{3}} \, ,$$
yielding the desired contradiction.
\qed
\end{proof}

\begin{lemma}\label{maxdegree4}
$G$ has no vertex of degree more than~$4$.
\end{lemma}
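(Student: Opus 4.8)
The plan is to argue by contradiction: suppose $G$ contains a vertex $v$ with $d(v)\ge 5$, and bound $|\mim{G}|$ by splitting the maximal induced matchings of $G$ according to how they meet $v$. Concretely, $\mim{G}=\mim{G}(\{v\},\emptyset)\ \uplus\ \biguplus_{p\in N_G(v)}\mim{G}(\emptyset,\{v,p\})$, since a maximal induced matching either avoids $v$ or covers $v$ through exactly one edge $vp$. The matchings in $\mim{G}(\{v\},\emptyset)$ use no edge at $v$, hence are maximal induced matchings of $G-v$; as $G-v$ is smaller than $G$ and thus not a counterexample, $|\mim{G}(\{v\},\emptyset)|\le|\mim{G-v}|\le 3^{(n-1)/3}$. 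For each $p\in N_G(v)$, triangle-freeness of $G$ means that $v$ and $p$ have no common neighbour, so $|N_G[\{v,p\}]|=d(v)+d(p)$, and Lemma~\ref{exc.inc.upperbound} with $X=\emptyset$ gives $|\mim{G}(\emptyset,\{v,p\})|\le 3^{(n-d(v)-d(p))/3}$.

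The point where the argument needs a structural input beyond the trivial bound $d(p)\ge 2$ is here: with $d(p)\ge 2$ alone one only obtains $|\mim{G}|\le 3^{(n-1)/3}+d(v)\cdot 3^{(n-d(v)-2)/3}$, and a quick check shows the bracketed coefficient exceeds $1$ when $d(v)\in\{5,6\}$, so this does not suffice. To close this gap I would invoke Lemma~\ref{degree2}: a degree-$2$ vertex forces both of its neighbours to have degree $3$, so if some neighbour $p$ of $v$ had degree $2$ then $d(v)=3$, contradicting $d(v)\ge 5$. Hence $d(p)\ge 3$ for every $p\in N_G(v)$, and summing the bounds over the $d(v)$ neighbours gives
$$|\mim{G}|\ \le\ 3^{\frac{n-1}{3}}+d(v)\cdot 3^{\frac{n-d(v)-3}{3}}\ =\ \Bigl(3^{-\frac13}+d(v)\cdot 3^{-\frac{d(v)+3}{3}}\Bigr)\cdot 3^{\frac n3}.$$

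It then remains to verify that $h(d):=3^{-1/3}+d\cdot 3^{-(d+3)/3}<1$ for every integer $d\ge 5$. This is routine: the sequence $d\mapsto d\cdot 3^{-d/3}$ is decreasing for $d\ge 3$ (the ratio of consecutive terms is $\tfrac{d+1}{d}\cdot 3^{-1/3}$, which is below $1$ as soon as $d\ge 3$), so it is enough to check the single value $h(5)=3^{-1/3}+5\cdot 3^{-8/3}$, which is less than $1$. This contradicts $|\mim{G}|>3^{n/3}$, completing the proof.

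I expect the only genuine obstacle to be the one flagged above: the crude count over $N_G(v)$ leaves a gap precisely at degrees $5$ and $6$, and that gap is exactly what Lemma~\ref{degree2} is set up to fill, by upgrading the neighbour degrees from $\ge 2$ to $\ge 3$. Beyond this, the proof is a direct combination of the partition of $\mim{G}$, the subgraph bound for matchings covering a fixed edge (Lemma~\ref{exc.inc.upperbound}), the elementary observation $|N_G[\{v,p\}]|=d(v)+d(p)$ in triangle-free graphs, and a one-line monotonicity estimate.
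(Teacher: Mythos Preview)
Your proof is correct and follows essentially the same approach as the paper: the same partition $\mim{G}=\mim{G}(\{v\},\emptyset)\uplus\bigcup_{p\in N(v)}\mim{G}(\emptyset,\{v,p\})$, the same use of Lemma~\ref{exc.inc.upperbound} together with $|N[\{v,p\}]|=d(v)+d(p)$, and the same appeal to Lemma~\ref{degree2} (combined with Lemma~\ref{degree1}) to guarantee $d(p)\ge 3$ for all neighbours $p$. Your explicit observation that the weaker bound $d(p)\ge 2$ fails precisely at $d(v)\in\{5,6\}$, and your clean monotonicity check for $h(d)$, make the role of Lemma~\ref{degree2} more transparent than in the paper's own write-up.
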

\begin{proof}
Suppose there is a vertex $u\in V(G)$ such that $d(u)\ge 5$. Due to Lemmas~\ref{degree1} and~\ref{degree2}, every neighbor of $u$ has degree at least $3$. Clearly, $\mim{G}=M(\emptyset,\{u\}) \uplus M(\{u\},\emptyset)$. To find an upper bound on $|M(\emptyset,\{u\})|$, observe that $|M(\emptyset,\{u\})|=\sum_{p\in N(u)}|M(\emptyset, \{u,p\})|$.
For every $p\in N(u)$, it holds that $|N[\{u,p\}]|=d(u)+d(p)$ and $d(p)\ge 3$. Hence, using Lemma~\ref{exc.inc.upperbound}, we find that 
$$
 |M(\emptyset,\{u\})|\leq d(u)\cdot 3^{\frac{n-(d(u)+3)}{3}}  \, .
$$
Since $M(\{u\},\emptyset)=\mim{G-u}$ and $G-u$ is not a counterexample, we have that 
$$
 |M(\{u\},\emptyset)|\le 3^{\frac{n-1}{3}} \, .
$$

Combining the two upper bounds yields 
$$|\mim{G}|\le d(u) \cdot 3^{\frac{n-(d(u)+3)}{3}}+3^{\frac{n-1}{3}} = \big{(} d(u)\cdot  3^{-\frac{d(u)+3}{3}}+3^{-\frac{1}{3}}\big{)} \cdot 3^{\frac{n}{3}} \, .$$
Since $d(u)\geq 5$ by assumption and $d(u)\cdot 3^{-(d(u)+3)/3}+3^{-1/3}<1$ whenever $d(u)\geq 5$, we conclude that $|M_G|<3^{n/3}$. This contradicts the fact that $G$ is a counterexample.
\qed
\end{proof}

\begin{lemma}\label{nodegree2inc4}
$G$ has no $4$-cycle containing a vertex of degree~$2$.
\end{lemma}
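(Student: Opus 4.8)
The plan is to assume that $G$ contains a $4$-cycle with a vertex of degree~$2$ and derive a contradiction by a counting argument in the spirit of the proof of Lemma~\ref{2degree2adjacent}, but more involved. First I would fix the local structure. By Lemma~\ref{1degree2inc4} such a $4$-cycle contains at least two vertices of degree~$2$, and by Lemma~\ref{2degree2adjacent} no two of them are adjacent, so the cycle has exactly two degree-$2$ vertices and they are opposite; write the cycle as $uvwx$ with $d(u)=d(w)=2$, so that $N(u)=N(w)=\{v,x\}$ and $u,w$ are twins, and by Lemma~\ref{degree2} put $d(v)=d(x)=3$ with third neighbours $v'$ and $x'$. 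The next step is to record the structural facts $v',x'\notin\{u,v,w,x\}$, $v'\neq x'$, $v'\not\sim x$, $v\not\sim x'$, and (via Lemma~\ref{2degree2in5cycle} applied to the $5$-cycle $uvv'x'xu$) $d(v'),d(x')\geq 3$ whenever $v'\sim x'$. Each adjacency/equality statement is obtained from Lemma~\ref{l-twinlemma}: for instance, if no maximal induced matching covered both $u$ and $v'$ then $u$ and $v'$ would be twins, which forces $N(v')=\{v,x\}$ and hence $x'=v'$; a short check shows that otherwise such a matching always exists, and the few degenerate configurations that slip through (the component around the cycle collapsing to a small explicit graph such as $K_{3,2}$) are ruled out because those graphs have fewer than $3^{n/3}$ maximal induced matchings.

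I would then partition $\mim{G}$ by which edges incident with $\{u,v,w,x\}$ a maximal induced matching $M$ contains. Since $\{u,v,w,x\}$ induces a $C_4$, at most one of the cycle edges $uv,wv,ux,wx$ lies in $M$; moreover $vv'\in M$ (or $xx'\in M$) already excludes all four cycle edges. This yields the classes: $uv\in M$; $wv\in M$; $ux\in M$; $wx\in M$; $vv'\in M$ (and no cycle edge); $xx'\in M$ (and no cycle edge); $\{vv',xx'\}\subseteq M$; and no edge of $M$ meets $\{u,v,w,x\}$ at all. In the last class $u,v,w,x$ are uncovered and then maximality forces both $v'$ and $x'$ to be covered.

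Each class is bounded using Lemma~\ref{exc.inc.upperbound} and the minimality of $G$ (so that any graph with fewer than $n$ vertices is not a counterexample). If $uv\in M$ then $w,x,v'$ are forced uncovered and $M\setminus\{uv\}$ is a maximal induced matching of $G-N[\{u,v\}]=G-\{u,v,w,x,v'\}$, giving at most $3^{(n-5)/3}$; here $x'$ has degree at most~$2$ in the residual graph, and splitting on whether and how $x'$ is covered together with the value of $d(x')$ lowers this below $3^{(n-5)/3}$ except when $d(x')=3$ and $x'\not\sim v'$. The classes $wv,ux,wx\in M$ are symmetric. If $\{vv',xx'\}\subseteq M$ then $N[\{v,v',x,x'\}]$ contains at least seven vertices, so this class has size at most $3^{(n-7)/3}$. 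If $vv'\in M$ but $xx'\notin M$ then $x$ is also forced uncovered, so $M\setminus\{vv'\}$ is a maximal induced matching of the graph obtained by deleting $N[\{v,v'\}]\cup\{x\}$, which has $n-4-d(v')$ vertices. Finally, if no edge of $M$ meets $\{u,v,w,x\}$ then $M\in\mim{G-\{u,v,w,x\}}$ and $M$ covers $v'$; summing over the at most $d(v')-1$ edges that can cover $v'$, and using that $v'$ has degree $d(v')-1$ in $G-\{u,v,w,x\}$, bounds this class by a quantity that is largest (roughly $3^{(n-7)/3}$) when $d(v')=2$. Adding everything up yields $|\mim{G}|\leq g(d(v'),d(x'))\cdot 3^{n/3}$ for an explicit function $g$, and the proof finishes by checking $g(d(v'),d(x'))<1$ for all values $d(v'),d(x')\in\{2,3,4\}$.

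The real difficulty is this last estimate. The four cycle-edge classes alone contribute close to $4\cdot 3^{(n-5)/3}\approx 0.64\cdot 3^{n/3}$, so the naive application of Lemma~\ref{exc.inc.upperbound} is not enough; one genuinely has to cash in the extra forced-uncovered vertices, the reduced degrees of $v'$ and $x'$ in the residual graphs, and the structural facts above to shave each dominant term. The worst profile is $d(v')=d(x')=3$ with $v'\not\sim x'$, where two of the residual graphs are only barely non-extremal; there I would exploit the additional low-degree vertices those graphs inherit from the neighbours of $v'$ and $x'$ (and, in the fully collapsed sub-cases, recognise $G$ as a small explicit graph) to close the argument.
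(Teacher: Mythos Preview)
Your eight-class edge partition is a genuinely different route from the paper's, and the difficulty you flag at the end is real---but your diagnosis of it is off. With the bounds you write down, the case that fails to close is $d(v')=d(x')=2$ (with $v'\not\sim x'$), not $d(v')=d(x')=3$: the four cycle-edge classes give $4\cdot 3^{(n-5)/3}$, classes~5 and~6 give $3^{(n-6)/3}$ each, and classes~7 and~8 give at most $3^{(n-7)/3}$ each, for a total of roughly $1.017\cdot 3^{n/3}$. Your proposed fix, ``splitting on whether and how $x'$ is covered'' inside the class $uv\in M$, does not lower that class below $3^{(n-5)/3}$ in the way you claim: partitioning into $M(\{x'\},\{u,v\})$ and the subcase where $x'$ is covered yields $3^{(n-6)/3}+3^{(n-7)/3}\approx 1.17\cdot 3^{(n-5)/3}$, which is \emph{worse} than the unrefined bound. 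To genuinely gain you would have to bound $|M_{G-N[\{u,v\}]}|$ by exploiting that $x'$ is a pendant there and that its unique neighbour has $G$-degree at least~$3$ (Lemma~\ref{degree2}); this can be pushed through in most subcases but still leaves configurations like $v''=x''$ to be handled individually. None of this is in your proposal beyond the hopeful phrase ``exploit the additional low-degree vertices.''

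The paper avoids all of this with a three-term \emph{vertex} partition instead of an edge partition: fix one degree-$3$ cycle vertex (call it $w$) and the \emph{off-cycle} neighbour $z$ of the other degree-$3$ cycle vertex $y$, and write
\[
M_G \;=\; M(\emptyset,\{w\}) \,\uplus\, M(\{w\},\{z\}) \,\uplus\, M(\{w,z\},\emptyset).
\]
The first set is at most $3\cdot 3^{(n-5)/3}$ (three edges at $w$). The third is at most $2\cdot 3^{(n-5)/3}$, because any maximal matching missing both $w$ and $z$ is forced to contain one of the two cycle edges at~$y$. The middle set is handled by branching on the edge at $z$, giving $3^{(n-d(z)-4)/3}+(d(z)-1)\,3^{(n-d(z)-5)/3}$. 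The sum is below $3^{n/3}$ for each $d(z)\in\{2,3,4\}$ with no further refinement needed. The trick that makes this work---and that your symmetric edge partition lacks---is the asymmetric choice of one on-cycle and one off-cycle pivot: it folds your four cycle-edge classes into five terms of size $3^{(n-5)/3}$ (three at $w$, two at $y$) and concentrates all the remaining weight on a single off-cycle vertex whose degree controls the residual directly.
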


\begin{proof}
Assume for contradiction that $G$ has a $4$-cycle $C$ containing a vertex $u$ of degree~$2$. Due to Lemmas~\ref{1degree2inc4} and~Lemma~\ref{2degree2adjacent}, there is exactly one other vertex $v$ in $C$ that has degree~$2$, and $u$ and $v$ are not adjacent. Let $w$ and $y$ be the other two vertices of $C$. Since $d(u)=d(v)=2$, Lemma~\ref{degree2} implies that $d(w)=d(y)=3$. Let $x$ and $z$ be the neighbors of $w$ and $y$, respectively, that do not belong to $C$. We claim that $x\neq z$. For contradiction, suppose $x=z$. Then there is no maximal induced matching in $G$ that covers both $u$ and $x$. Hence, due to Lemma~\ref{l-twinlemma}, vertices $u$ and $x$ are twins. In particular $d(x)=2$, which implies that $V(G)=\{u,v,w,y,x\}$ and $|M_G|=|E(G)|=6<3^{5/3}$, contradicting the fact that $G$ is a counterexample.

Observe that $d(z)\in \{2,3,4\}$ due to Lemmas~\ref{degree1} and~\ref{maxdegree4}, and $d(x)\geq 2$ due to Lemma~\ref{degree1}. In order to find an upper bound on the number of maximal induced matchings in $G$, we partition $M_G$ as follows:
\begin{equation} 
\label{eq:3partition}
M_G= M(\emptyset,\{w\}) \uplus M(\{w\},\{z\})\uplus M(\{w,z\},\emptyset) \, .
\end{equation}

Since $N(w)=\{u,v,x\}$, it holds that $M(\emptyset,\{w\})=M(\emptyset,\{w,u\})\uplus M(\emptyset,\{w,v\}) \uplus M(\emptyset,\{w,x\})$. Recall that $d(x)\geq 2$. For every $p\in \{u,v,x\}$, it holds that $|N[\{w,p\}]|\geq 5$ and consequently $|M(\emptyset,\{w,p\})|\le 3^{(n-5)/{3}}$ due to Lemma~\ref{exc.inc.upperbound}. Therefore,
\[|M(\emptyset,\{w\})|\le 3\cdot3^{\frac{n-5}{3}} \, .\]

We now consider $M(\{w,z\},\emptyset)$.
Note that every maximal induced matching of $G$ that covers neither $w$ nor $z$ must contain either $uy$ or $vy$. Hence $|M(\{w,z\},\emptyset)|=|M(\{w,z\},\{u,y\})|+|M(\{w,z\},\{v,y\})|$.
Since $|N[\{u,y\}]|=|N[\{v,y\}]|=5$, we can use Lemma~\ref{exc.inc.upperbound} to find that
$$|M(\{w,z\},\emptyset)|\le 2\cdot3^{\frac{n-5}{3}}.$$

It remains to find an upper bound on $|M(\{w\},\{z\})|$. It is clear that $|M(\{w\},\{z\})|=\sum_{q\in N(z)} |M(\{w\},\{q,z\})|$. We first consider $M(\{w\},\{y,z\})$. Since $|N[\{y,z\}]|=d(y)+d(z)=3+d(z)$ and $w\notin N[\{y,z\}]$, we have $|M(\{w\},\{z,y\})|\leq 3^{(n-(d(z)+4))/3}$ due to Lemma~\ref{exc.inc.upperbound}. Now let $q\in N(z)\setminus \{y\}$. Observe that every matching in $M(\{w\},\{q,z\})$ contains edge $qz$ and does not cover $w$ by definition, and hence covers neither $u$ nor $v$. This means that $M(\{w\},\{q,z\})=M(\{u,v,w\},\{q,z\})$. Since $d(q)\geq 2$ due to Lemma~\ref{degree1}, it holds that $|N[\{q,z\}]|\geq d(z)+2$. If $q\neq x$, then $\{u,v,w\}\cap N[\{q,z\}]=\emptyset$ and hence $|\{u,v,w\}\cup N[\{q,z\}]|\geq d(z)+5$. Suppose $q=x$. Then $wvyzx$ is a $5$-cycle, and $d(q)\geq 3$ as a result of Lemma~\ref{2degree2in5cycle}. Moreover, although now $w\in N[\{q,z\}]$, neither $u$ nor $v$ belongs to $N[\{q,z\}]$. Hence $|\{u,v,w\}\cup N[\{q,z\}]|\geq d(z)+5$ also in this case. Using Lemma~\ref{exc.inc.upperbound}, we conclude that $|M(\{w\},\{q,z\})|=|M(\{u,v,w\},\{q,z\})|\leq 3^{n-(d(z)+5)/3}$. Since this holds for every $q\in N(z)\setminus \{y\}$, we find that
$$ |M(\{w\},\{z\})|\leq 3^{\frac{n-(d(z)+4)}{3}} + (d(z)-1) \cdot 3^{\frac{n-(d(z)+5)}{3}} \, .$$

The obtained upper bounds on $|M(\emptyset,\{w\})|$, $|M(\{w\},\{z\})|$, and $|M(\{w,z\},\emptyset)|$, together with~\eqref{eq:3partition}, yield the following inequality:
$$|\mim{G}|\le \big{(} 5\cdot3^{-\frac{5}{3}} + 3^{-\frac{d(z)+4}{3}} + (d(z)-1)\cdot 3^{-\frac{d(z)+5}{3}} \big{)} \cdot 3^{\frac{n}{3}}\, .$$
Recall that $d(z)\in \{2,3,4\}$. Since it can readily be verified that for each value of $d(z)\in \{2,3,4\}$, the above inequality simplifies to $|M_G|<3^{n/3}$, we obtain the desired contradiction.
 \qed
\end{proof}

\begin{lemma}\label{nodegree2}
 $G$ has no vertex of degree $2$.
\end{lemma}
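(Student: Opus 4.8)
\textit{Proof proposal.}
Suppose for contradiction that some $u\in V(G)$ has $d(u)=2$, and write $N(u)=\{v,w\}$. By Lemma~\ref{degree2} we have $d(v)=d(w)=3$, so we may write $N(v)=\{u,a_1,a_2\}$ and $N(w)=\{u,b_1,b_2\}$. Triangle-freeness gives $vw\notin E(G)$, $a_1a_2\notin E(G)$ and $b_1b_2\notin E(G)$; since $u$ has degree $2$, Lemma~\ref{nodegree2inc4} (no $4$-cycle through a degree-$2$ vertex) rules out a common neighbour of $v$ and $w$ other than $u$, so $a_1,a_2,b_1,b_2$ are four distinct vertices, all lying outside $\{u,v,w\}$; and $d(a_i),d(b_i)\in\{2,3,4\}$ by Lemmas~\ref{degree1} and~\ref{maxdegree4}. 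One further fact, obtained from Lemma~\ref{l-twinlemma}, will be useful for the hard case: $u$ and $a_i$ are non-adjacent and not twins (because $a_i$ is non-adjacent to $w$), so some maximal induced matching covers both; such a matching must contain $uw$ (as $a_i\in N[\{u,v\}]$), hence it restricts to a maximal induced matching of $G-N[\{u,w\}]$ covering $a_i$, so $a_i$ has a neighbour outside $N[\{u,w\}]$. Symmetrically each $b_i$ has a neighbour outside $N[\{u,v\}]$; in particular $n\ge 8$.

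The plan is then to partition $M_G$ according to how a maximal induced matching $M$ meets $\{u,v,w\}$:
$$M_G=M(\emptyset,\{u,v\})\ \uplus\ M(\emptyset,\{u,w\})\ \uplus\ M(\{u\},\{v\})\ \uplus\ M(\{u,v\},\{w\})\ \uplus\ M(\{u,v,w\},\emptyset),$$
the five classes being, respectively: $uv\in M$; $uw\in M$; $u$ uncovered but $v$ covered; $u,v$ uncovered but $w$ covered; and $u,v,w$ all uncovered. For the first four classes I would apply Lemma~\ref{exc.inc.upperbound}. Since $|N[\{u,v\}]|=|N[\{u,w\}]|=5$, each of the first two classes has size at most $3^{(n-5)/3}$. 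A matching in $M(\{u\},\{v\})$ contains $va_1$ or $va_2$, and $|N[\{v,a_i\}]|=d(a_i)+3$ by triangle-freeness, so $|M(\{u\},\{v\})|\le 3^{(n-d(a_1)-3)/3}+3^{(n-d(a_2)-3)/3}$; likewise, using that $v\notin N[\{w,b_i\}]$ and $v$ is uncovered, $|M(\{u,v\},\{w\})|\le 3^{(n-d(b_1)-4)/3}+3^{(n-d(b_2)-4)/3}$.

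The crux is the last class $Q:=M(\{u,v,w\},\emptyset)$. Here maximality of $M$ and the fact that $M$ covers none of $u,v,w$ force that $uv$ and $uw$ cannot be added to $M$, hence $M$ covers at least one of $\{a_1,a_2\}$ and at least one of $\{b_1,b_2\}$. I would sub-partition $Q$ by which of these vertices are covered and by which edges, bounding each piece with Lemma~\ref{exc.inc.upperbound} applied either to a single edge $a_ib_j\in M$ (where $u\,v\,a_i\,b_j\,w$ is a $5$-cycle through the degree-$2$ vertex $u$, so Lemma~\ref{2degree2in5cycle} gives $d(a_i),d(b_j)\ge 3$ and thus $|N[\{a_i,b_j\}]|=d(a_i)+d(b_j)\ge 6$), or to a pair of disjoint, mutually non-adjacent edges, one incident with $\{a_1,a_2\}$ and one with $\{b_1,b_2\}$, whose closed neighbourhoods together contain at least $\{u,v,w,a_1,a_2,b_1,b_2\}$, i.e.\ at least $7$ vertices; the low-degree sub-cases (some $a_i$ or $b_i$ of degree $2$) are pinned down using Lemma~\ref{degree2}, the adjacency restrictions from Lemmas~\ref{2degree2in5cycle} and~\ref{nodegree2inc4}, and the ``outside neighbour'' facts noted above. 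Summing all the estimates and using $d(a_i),d(b_i)\in\{2,3,4\}$, the worst case yields $|M_G|<3^{n/3}$, contradicting the choice of $G$. The main obstacle is exactly this last step: the easy bound $|Q|\le 3^{(n-3)/3}$ coming from $Q\subseteq M_{G-\{u,v,w\}}$ is far too weak, so one must really exploit the forced coverage of the second neighbourhood of $u$, and the resulting arithmetic is tight.
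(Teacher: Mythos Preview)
Your route is genuinely different from the paper's. The paper does \emph{not} centre the argument on the degree-$2$ vertex; instead it picks a degree-$3$ vertex $u$ adjacent to some degree-$2$ vertex and splits into two cases according to whether $u$ has at least two or exactly one degree-$2$ neighbour. The partitions used are of the form
\[
M(\emptyset,\{v\})\uplus M(\{v\},\{w\})\uplus M(\{v,w\},\{t\})\uplus M(\{v,w,t\},\emptyset)
\]
(and a four-deep analogue in Case~2), always ending in a class where an edge through $u$ is forced; so the ``residual'' class is handled by a single application of Lemma~\ref{exc.inc.upperbound}, not by a second-neighbourhood argument.

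Your sketch, by contrast, pushes all the difficulty into $Q=M(\{u,v,w\},\emptyset)$, and here there is a real gap. First, the stated reason for ``at least $7$ vertices'' is wrong: the closed neighbourhoods of the two matching edges \emph{never} contain $u$ (since $N(u)=\{v,w\}$ and neither $v$ nor $w$ is an endpoint of those edges), and they need not contain $a_2$ or $b_2$ either. Second, and more seriously, $7$ is not enough arithmetic. With only $|\,\cdot\,|\ge 7$ you would get $|Q|\lesssim 4\cdot 3^{(n-7)/3}\approx 0.31\cdot 3^{n/3}$; together with your bounds on the first four classes (which in the worst case $d(a_i)=d(b_j)=2$ sum to $4\cdot 3^{(n-5)/3}+2\cdot 3^{(n-6)/3}\approx 0.864\cdot 3^{n/3}$) this overshoots $3^{n/3}$. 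Even the correct uniform bound $|X\cup N[\{a_ic_i,b_jd_j\}]|\ge 9$ (obtained by putting $X=\{u,v,w\}$ and using that the two edges are non-adjacent) gives $|Q|\le 4\cdot 3^{(n-9)/3}\approx 0.148\cdot 3^{n/3}$, and $0.864+0.148>1$. To push the sum below $1$ you would have to show that the pieces $Q_2,Q_3,Q_4$ (where additionally $a_1$ and/or $b_1$ is uncovered) satisfy the sharper bound $|X\cup N[\cdot]|\ge 10$; this is true, but it requires a careful sub-case analysis of when $a_1$ or $b_1$ already lies in the neighbourhood (e.g.\ when $c_1=d_1$ or $d_1=d_2$), plus an unstated two-edge extension of Lemma~\ref{exc.inc.upperbound}. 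None of this is in your sketch, and your phrase ``the resulting arithmetic is tight'' understates how tight it is. The paper's pivot to a degree-$3$ vertex is precisely what avoids this delicate residual class.
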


\begin{proof}
Due to Lemma~\ref{degree2}, in order to prove Lemma~\ref{nodegree2}, it suffices to prove that there is no vertex of degree~$3$ in $G$ that is adjacent to a vertex of degree~$2$. For contradiction, suppose $G$ has a vertex $u$ such that $d(u)=3$ and $u$ is adjacent to at least one vertex of degree~$2$. Let $N(u)=\{v,w,x\}$. We distinguish two cases, depending on the number of vertices of degree~$2$ in the neighborhood of $u$.

\medskip
\noindent
{\em Case 1. $u$ has at least two neighbors of degree~$2$.}

\smallskip
\noindent
Without loss of generality, assume that $d(v)=d(x)=2$. Let $N(x)=\{u,t\}$. Observe that $d(t)=3$ due to Lemma~\ref{degree2}, and $d(w)\in \{2,3,4\}$ due to Lemmas~\ref{degree1} and~\ref{maxdegree4}. It is easy to see that we can partition $M_G$ as follows:
$$M_G = M(\emptyset,\{v\}) \uplus M(\{v\},\{w\}) \uplus M(\{v,w\},\{t\}) \uplus M(\{v,w,t\},\emptyset) \, .$$

First consider $M(\emptyset,\{v\})$. Since every matching in this set contains exactly one of the two edges incident with $v$, and both neighbors of $v$ have degree exactly~$3$ due to Lemma~\ref{degree2}, we can use Lemma~\ref{exc.inc.upperbound} to find that 
\[ |M(\emptyset,\{v\})|\leq 2 \cdot 3^{\frac{n-5}{3}} \, .\]

Now consider $M(\{v\},\{w\})$. It is clear that $|M(\{v\},\{w\})|=|M(\{v\},\{u,w\})|+\sum_{q\in N(w)\setminus \{u\}} |M(\{v\},\{q,w\})|$. Lemma~\ref{exc.inc.upperbound}, together with the fact that $|N[\{u,w\}]|=d(u)+d(w)=3+d(w)$, implies that $|M(\{v\},\{u,w\})|\leq 3^{(n-(d(w)+3))/3}$. Let $q\in N(w)\setminus \{u\}$. Since $v$ has degree~$2$ and is therefore not contained in a $4$-cycle due to Lemma~\ref{nodegree2inc4}, vertex $q$ is not adjacent to $v$. Hence $|\{v\}\cup N[\{q,w\}]|=1+d(q)+d(w)\geq 3+d(w)$, where we use the fact that $d(q)\geq 2$ due to Lemma~\ref{degree1}. This implies that $|M(\{v\},\{q,w\})|\leq 3^{(n-(d(w)+3))/3}$. Since this holds for any $q\in N(w)\setminus \{u\}$, we find that
\[ |M(\{v\},\{w\})|\leq d(w)\cdot 3^{\frac{n-(d(w)+3)}{3}} \, .\]

\begin{sloppy}
To find an upper bound on $|M(\{v,w\},\{t\})|$, we first observe that $M(\{v,w\},\{t\})=M(\{u,v,w\},\{t\})$, as any maximal induced matching that covers neither $v$ nor $w$ but covers $t$, cannot cover $u$. Note that $|M(\{u,v,w\},\{t\})|=|M(\{u,v,w\},\{t,x\})| + \sum_{q\in N(t)\setminus \{x\}} |M(\{u,v,w\},\{t,q\})|$. Recall that $d(x)=2$ and $d(t)=3$. Since $G$ is triangle-free, $x$ is adjacent to neither $v$ nor $w$. The same holds for $t$ due to Lemma~\ref{nodegree2inc4} and the fact that $x$ has degree~$2$. Hence $|\{u,v,w\}\cup N[\{t,x\}]|=7$, so $|M(\{u,v,w\},\{t,x\})|\leq 3^{(n-7)/3}$ due to Lemma~\ref{exc.inc.upperbound}. 
Let $q\in N(t)\setminus \{x\}$. Then $q\notin \{u,v,w\}$ due to the triangle-freeness of $G$ and Lemma~\ref{nodegree2inc4}. Moreover, neither $u$ nor $v$ is adjacent to $q$ as a result of Lemmas~\ref{nodegree2inc4} and~\ref{2degree2in5cycle}, respectively. Recall that $d(q)\geq 2$ due to Lemma~\ref{degree1}. Moreover, if $w$ is adjacent to $q$, then $q$ has degree at least~$3$ by Lemma~\ref{2degree2in5cycle}. Hence $|\{u,v,w\}\cup N[\{t,q\}]|\geq 8$, so Lemma~\ref{exc.inc.upperbound} implies that $|M(\{u,v,w\},\{t,q\})|\leq 3^{(n-8)/3}$. Since $|N(t)\setminus \{x\}|=2$, we conclude that
\[ |M(\{v,w\},\{t\})|\leq 3^{\frac{n-7}{3}} + 2\cdot 3^{\frac{n-8}{3}} \, . \]
\end{sloppy}

Finally, we consider $M(\{v,w,t\},\emptyset)$. Since every matching in this set contains edge $ux$, we have that $M(\{v,w,t\},\emptyset)=M(\{v,w,t\},\{u,x
\})$. Using Lemma~\ref{exc.inc.upperbound} and the fact that $|N[\{u,x\}]|=5$, we deduce that
\[ |M(\{v,w,t\},\emptyset)| \leq 3^{\frac{n-5}{3}} \, .\]

Putting all this together, we obtain the following inequality:
\[ |M_G| \leq 3 \cdot 3^{\frac{n-5}{3}} + d(w)\cdot 3^{\frac{n-(d(w)+3)}{3}} + 3^{\frac{n-7}{3}} + 2\cdot 3^{\frac{n-8}{3}} \, .\]
It is easy to verify that the right-hand side of this inequality is less than~$3^{n/3}$ for every fixed value of $d(w)\in \{2,3,4\}$. This contradicts the assumption that $G$ is a counterexample and completes the proof of Case 1.

\medskip
\noindent
{\em Case 2. $u$ has exactly one neighbor of degree~$2$.}

\smallskip
\noindent
Without loss of generality, assume that $d(x)=2$. Then $d(v)\geq 3$ and $d(w)\geq 3$ due to Lemma~\ref{degree1}.
Let $N(x)=\{u,t\}$. We partition $M_G$ as follows: 
$$M_G=M(\emptyset,\{t\})\uplus M(\{t\},\{w\})\uplus M(\{t,w\},\{v\})\uplus M(\{t,w,v\},\emptyset) \, .$$

We first consider $M(\emptyset,\{t\})$. Due to Lemma~\ref{degree2}, vertex $t$ has degree~3. If $t$ has at least~$2$ neighbors of degree~$2$, then we can apply Case $1$ to vertex $t$ to obtain a contradiction. Suppose $t$ has at most one neighbor of degree~$2$. Since $x$ has degree~$2$, both vertices in $N(t)\setminus \{x\}$ have degree at least $3$. Hence $|N[\{t,x\}]=5$ and $|N[\{t,q\}]|\geq 6$ for every $q\in N(t)\setminus \{x\}$, and we can apply Lemma~\ref{exc.inc.upperbound} to find that 
$$|M(\emptyset, \{t\})|\le 3^{\frac{n-5}{3}}+2\cdot3^{\frac{n-6}{3}}\, .$$

To find an upper bound on $|M(\{t\},\{w\})|$, we first observe that $M(\{t\},\{w\})=M(\{t,x\},\{w\})$ due to the fact that no matching in $M(\{t\},\{w\})$ covers $x$. It is easy to see that $|M(\{t,x\},\{w\})|=|M(\{t,x\},\{w,u\})|+\sum_{q\in N(w)\setminus \{u\}}|M(\{t,x\},\{w,q\})|$. Since $x$ has degree $2$, it does not belong to any $4$-cycle due to Lemma~\ref{nodegree2inc4}. This implies that $t\notin N[\{u,w\}]$, and hence $|\{t,x\}\cup N[\{w,u\}]|\geq d(w)+d(u)+1=d(w)+4$. Applying Lemma~\ref{exc.inc.upperbound} yields $|M(\{t,x\},\{u,w\})|\le 3^{(n-(d(w)+4))/{3}}$. Let $q\in N(w)\setminus \{u\}$. By Lemma~\ref{degree1}, vertex $q$ has degree at least~$2$. Note that $q\neq t$ and $x\notin N(q)$ due to Lemma~\ref{nodegree2inc4}, and $d(q)\geq 3$ if $t\in N(q)$ due to Lemma~\ref{2degree2in5cycle}. This implies that $|\{t,x\}\cup N[\{w,q\}]|\geq d(w)+4$. By Lemma~\ref{exc.inc.upperbound}, we have that $|M(\{t,x\},\{w,q\})|\leq 3^{(n-(d(w)+4))/3}$. Hence we conclude that
$$|M(\{t\},\{w\})| = |M(\{t,x\},\{w\})|\le d(w) \cdot 3^{\frac{n-(d(w)+4)}{3}}\, .$$

Now consider $M(\{t,w\},\{v\})$. No matching in this set covers $x$ and therefore we have $M(\{t,w\},\{v\})=M(\{t,w,x\},\{v\})$. Clearly it holds that
$|M(\{t,w,x\},\{v\})|=|M(\{t,w,x\},\{v,u\})|+\sum_{q\in N(v)\setminus \{u\}}|M(\{t,w,x\},\{v,q\})|$. Observe that $t\notin N[\{v,u\}]$ due to Lemma~\ref{nodegree2inc4} and the fact that $G$ is triangle-free. Hence $\{t,w,x\}\cup |N[\{v,u\}]|=d(v)+d(u)+1= d(v)+4$, so we can apply Lemma~\ref{exc.inc.upperbound} to find that $|M(\{t,w,x\},\{v,u\})|\leq 3^{(n-(d(v)+4))/3}$. Let $q\in N(v)\setminus \{u\}$. Due to the triangle-freeness of $G$ and Lemma~\ref{nodegree2inc4}, vertex $x$ does not belong to $N[\{v,q\}]$, and neither $t$ nor $w$ belongs to $N[v]$. We claim that $|\{t,w,x\} \cup N[\{v,q\}]|\geq d(v)+5$. This is immediately clear if neither $t$ nor $w$ belongs to $N[q]$, as $d(q)\geq 2$. Suppose both $t$ and $w$ belong to $N[q]$. If $d(q)\leq 3$, then there is no maximal induced matching in $G$ that covers both $u$ and $q$. Since $u$ and $q$ are not twins, this contradicts Lemma~\ref{l-twinlemma}. Hence $d(q)\geq 4$, implying that $|\{t,w,x\} \cup N[\{v,q\}]|\geq d(v)+5$. If exactly one of the vertices $t$ and $w$ belongs to $N[q]$, then $d(q)\geq 3$ as a result of Lemma~\ref{2degree2in5cycle} and Lemma~\ref{nodegree2inc4}, respectively. Hence we have that $|\{t,w,x\} \cup N[\{v,q\}]|\geq d(v)+5$ also in this case. We can now invoke Lemma~\ref{exc.inc.upperbound} to find that $\sum_{q\in N(v)\setminus \{u\}}|M(\{t,w,x\},\{v,q\})|\leq (d(v)-1)\cdot 3^{(n-(d(v)+5))/3}$, and we can thus conclude that
$$|M(\{t,w\},\{v\})|= |M(\{t,w,x\},\{v\})|\le 3^{\frac{n-(d(v)+4)}{3}}+(d(v)-1)\cdot 3^{\frac{n-(d(v)+5)}{3}}\, .$$

Finally, we consider $M(\{t,w,v\},\emptyset)$. Since any maximal induced matching in this set must contain edge $ux$, it holds that $M(\{t,w,v\},\emptyset)=M(\{t,w,v\},\{u,x\})$. The fact that $|N[\{u,x\}]|=5$ together with Lemma~\ref{exc.inc.upperbound} readily implies that 
$$|M(\{t,w,v\},\emptyset)|=|M(\{t,w,v\},\{u,x\})|\le 3^{(n-5)/3}\, .$$

Combining the obtained upper bounds yields the following inequality:
$$|M_G|\le 2\cdot 3^{\frac{n-5}{3}}+2\cdot3^{\frac{n-6}{3}}+d(w) \cdot 3^{\frac{n-(d(w)+4)}{3}}+3^{\frac{n-(d(v)+4)}{3}}+(d(v)-1)\cdot 3^{\frac{n-(d(v)+5)}{3}} \, .$$
Recall that $d(v)\geq 3$ and $d(w)\geq 3$. We also have that $d(v)\leq 4$ and $d(w)\leq 4$ as a result of Lemma~\ref{maxdegree4}. It is therefore easy to check that the right-hand side of this inequality is less than~$3^{n/3}$. This contradicts the fact that $G$ is a counterexample and completes the proof of the lemma.
\qed
\end{proof}

\begin{lemma}\label{nodegree4}
$G$ is cubic.
\end{lemma}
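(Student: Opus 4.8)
The plan is to reach a contradiction from the existence of a vertex of degree~$4$. By Lemmas~\ref{degree1}, \ref{nodegree2} and~\ref{maxdegree4}, every vertex of $G$ has degree $3$ or~$4$, so it suffices to rule out degree~$4$. Suppose for contradiction that some $v\in V(G)$ has degree~$4$. I would split into two cases according to whether $v$ has a neighbour of degree~$3$, and in each case partition $\mim{G}$ with respect to a single well-chosen vertex, bounding the pieces with Lemma~\ref{exc.inc.upperbound} and the minimality of~$G$.

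In the first case, $v$ has a neighbour $w$ with $d(w)=3$, and the key point is to count around $w$ rather than around $v$. Write $N(w)=\{v,a,b\}$ (three distinct vertices, since $d(w)=3$) and use the partition $\mim{G}=\mim{G}(\emptyset,\{w\})\uplus\mim{G}(\{w\},\emptyset)$. Every maximal induced matching covering $w$ contains exactly one of the edges $wv$, $wa$, $wb$, so $\mim{G}(\emptyset,\{w\})=\mim{G}(\emptyset,\{w,v\})\uplus\mim{G}(\emptyset,\{w,a\})\uplus\mim{G}(\emptyset,\{w,b\})$. Since $G$ is triangle-free, two adjacent vertices have no common neighbour, hence $|N[\{w,v\}]|=d(w)+d(v)=7$ and $|N[\{w,a\}]|=d(w)+d(a)\ge 6$ and likewise for $b$ (using $d(a),d(b)\ge 3$ by Lemmas~\ref{degree1} and~\ref{nodegree2}); Lemma~\ref{exc.inc.upperbound} then gives $|\mim{G}(\emptyset,\{w,v\})|\le 3^{(n-7)/3}$ and $|\mim{G}(\emptyset,\{w,a\})|,|\mim{G}(\emptyset,\{w,b\})|\le 3^{(n-6)/3}$. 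For the other block, any maximal induced matching avoiding $w$ is a maximal induced matching of $G-w$, which has $n-1$ vertices and hence is not a counterexample, so $|\mim{G}(\{w\},\emptyset)|\le|\mim{G-w}|\le 3^{(n-1)/3}$. Adding up yields $|\mim{G}|\le\bigl(3^{-7/3}+2\cdot 3^{-2}+3^{-1/3}\bigr)\,3^{n/3}<3^{n/3}$, contradicting the choice of~$G$. It is exactly this case that forces the slightly unusual choice of centre: counting around $v$ itself, when all of its neighbours have degree~$3$, only gives $|\mim{G}|\le\bigl(4\cdot 3^{-7/3}+3^{-1/3}\bigr)\,3^{n/3}$, which is marginally larger than $3^{n/3}$, whereas centring at the degree-$3$ neighbour $w$ replaces one $3^{(n-6)/3}$ term by a $3^{(n-7)/3}$ term (because $w$ is guaranteed a degree-$4$ neighbour, namely $v$), and this is just enough to drop below $3^{n/3}$.

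In the second case, $v$ has no neighbour of degree~$3$, so all four neighbours of $v$ have degree~$4$, and then I would count around $v$ directly: $\mim{G}=\mim{G}(\emptyset,\{v\})\uplus\mim{G}(\{v\},\emptyset)$, with $\mim{G}(\emptyset,\{v\})$ split over the four edges at~$v$. For every neighbour $p$ of $v$ triangle-freeness gives $|N[\{v,p\}]|=d(v)+d(p)=8$, so $|\mim{G}(\emptyset,\{v,p\})|\le 3^{(n-8)/3}$ by Lemma~\ref{exc.inc.upperbound}; together with $|\mim{G}(\{v\},\emptyset)|\le|\mim{G-v}|\le 3^{(n-1)/3}$ this gives $|\mim{G}|\le\bigl(4\cdot 3^{-8/3}+3^{-1/3}\bigr)\,3^{n/3}<3^{n/3}$, again a contradiction. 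Hence $G$ has no vertex of degree~$4$ and is therefore cubic.

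The only genuine obstacle is the gap in the first case: the naive partition at the degree-$4$ vertex overshoots $3^{n/3}$ by a tiny margin, and one has to recognise that shifting the count to an adjacent degree-$3$ vertex buys precisely the extra factor of $3^{-1}$ needed; apart from that, both cases are routine bookkeeping with Lemma~\ref{exc.inc.upperbound} and the minimality of~$G$.
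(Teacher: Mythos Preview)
Your proof is correct and uses essentially the same idea as the paper: partition $\mim{G}$ around a single vertex that is guaranteed to have a degree-$4$ neighbour, so that one of the ``covering'' terms drops from $3^{(n-6)/3}$ to $3^{(n-7)/3}$ (or better), which is precisely what is needed to get below $3^{n/3}$.

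The paper's presentation is marginally cleaner and avoids your case split. Instead of distinguishing whether the degree-$4$ vertex has a degree-$3$ neighbour, the paper picks a degree-$4$ vertex $u$ and then partitions around an \emph{arbitrary} neighbour $v$ of $u$. Since $v$ automatically has $u$ as a degree-$4$ neighbour, one term in $\sum_{q\in N(v)}|\mim{G}(\emptyset,\{v,q\})|$ is bounded by $3^{(n-(d(v)+4))/3}$ while the remaining $d(v)-1$ terms are bounded by $3^{(n-(d(v)+3))/3}$; together with $|\mim{G}(\{v\},\emptyset)|\le 3^{(n-1)/3}$ this gives a single parametric inequality that is checked for $d(v)\in\{3,4\}$. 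Your Case~1 is exactly this with $d(v)=3$, and your Case~2 is a slightly sharper version of this with $d(v)=4$ (you exploit that \emph{all} neighbours have degree~$4$, not just one). Your commentary explaining why centring at the degree-$4$ vertex fails when all its neighbours have degree~$3$ is a nice diagnostic that the paper does not include; it makes transparent why the argument must pivot to an adjacent vertex.
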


\begin{proof}
Due to Lemmas~\ref{degree1},~\ref{maxdegree4}, and~\ref{nodegree2}, every vertex in $G$ has degree~$3$ or~$4$. Hence, in order to prove Lemma~\ref{nodegree4}, it suffices to prove that $G$ has no vertex of degree~$4$. For contradiction, suppose there exists a vertex $u$ such that $d(u)=4$. Let $v$ be a neighbor of $u$. To find an upper bound on $|M_G|$, we partition $M_G$ into two sets $M(\emptyset,\{v\})$ and $M(\{v\}, \emptyset)$ and find an upper bound on the sizes of these sets.

We first consider $M(\emptyset,\{v\})$. Observe that $|M(\emptyset,\{v\})|=\sum_{q\in N(v)}|M(\emptyset,\{v,q\})|$. If $q=u$, then $|N[\{v,q\}]|=d(v)+4$ and hence $|M(\emptyset,\{u,v\})|\le 3^{(n-(d(v)+4))/3}$ by Lemma~\ref{exc.inc.upperbound}. For any vertex $q\in N(v)\setminus\{u\}$, the fact that $|N[\{q,v\}]|\geq d(v)+3$ together with Lemma~\ref{exc.inc.upperbound} implies that $|M(\emptyset,\{q,v\})|\le 3^{(n-(d(v)+3))/3}$. Hence we find that
$$|M(\emptyset,\{v\})|\le 3^{\frac{n-(d(v)+4)}{3}}+(d(v)-1)\cdot 3^{\frac{n-(d(v)+3)}{3}}\, .$$

Since $M(\{v\},\emptyset)=\mim{G-v}$ and $G-v$ is not a counterexample, we have that 
$$|M(\{v\},\emptyset)|\le 3^{\frac{n-1}{3}}\, .$$

Hence we conclude that 
$$|\mim{G}|\le 3^{\frac{n-(d(v)+4)}{3}}+(d(v)-1)\cdot 3^{\frac{n-(d(v)+3)}{3}}+3^{\frac{n-1}{3}}\, .$$
For any fixed value of $d(v)\in \{3,4\}$, it can easily be verified that $|\mim{G}|\le 3^{n/3}$, yielding the desired contradiction.
\qed
\end{proof}

\begin{lemma}
\label{l-c5twin}
Let $u,v\in V(G)$. If $u$ and $v$ are contained in a $5$-cycle $C$, then $u$ and $v$ have no common neighbor in $V(G)\setminus V(C)$.
\end{lemma}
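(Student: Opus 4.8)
The plan is to derive a contradiction from Lemma~\ref{l-twinlemma}, exploiting that $G$ is cubic (Lemma~\ref{nodegree4}). Suppose $u$ and $v$ lie on a $5$-cycle $C = v_1 v_2 v_3 v_4 v_5$ and share a common neighbour $z \in V(G) \setminus V(C)$. First I would dispose of the trivial subcase where $u$ and $v$ are adjacent: then $u$, $v$, $z$ would form a triangle, which is impossible. Hence $u$ and $v$ are at distance $2$ along $C$, and after renaming we may take $u = v_1$ and $v = v_3$. Since $G$ is cubic and $v_1$ already has the three neighbours $v_2$, $v_5$, $z$, we get $N(u) = \{v_2, v_5, z\}$; symmetrically $N(v) = \{v_2, v_4, z\}$. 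In particular $u$ and $v$ are not twins, because $v_4 \neq v_5$. (One also checks effortlessly, using cubicity and triangle-freeness, that $C$ has no chord and that $z$ is adjacent to no vertex of $C$ other than $u$ and $v$, though only the non-twin conclusion is actually needed.)

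The crux is to show that no induced matching of $G$ covers both $u$ and $v$; granting this, no maximal induced matching covers both, so Lemma~\ref{l-twinlemma} would force $u$ and $v$ to be twins --- contradicting the previous paragraph. To prove the claim, suppose $M$ is an induced matching covering both $u$ and $v$, and look at the edge of $M$ at $u$ and the edge of $M$ at $v$, each being one of the three edges listed above. If either $uz$ or $vz$ lies in $M$, then $z$ is covered, and since both $u$ and $v$ are adjacent to $z$, one of them cannot be covered --- a contradiction. If either $uv_2$ or $vv_2$ lies in $M$, the same happens with $v_2$ in place of $z$. The only remaining possibility is that $uv_5 \in M$ and $vv_4 \in M$; but $v_4 v_5$ is an edge of $C$, so the four endpoints $u, v_5, v, v_4$ induce a subgraph containing at least the three edges $uv_5$, $vv_4$, $v_4 v_5$, contradicting that $M$ is induced. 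This exhausts the cases and establishes the claim.

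I do not anticipate a genuine obstacle: the rigidity of a cubic graph around the $5$-cycle pins down all the relevant adjacencies, and the verification that no induced matching meets both $u$ and $v$ is a short finite case check on which of the (at most three) edges at $u$ and at $v$ could be used. The only thing to be careful about is to make sure the neighbourhoods $N(u)$ and $N(v)$ are read off correctly from cubicity --- this is what guarantees both that $uv_5, vv_4$ is the unique surviving case and that $u$ and $v$ fail to be twins, which together with Lemma~\ref{l-twinlemma} yields the desired contradiction.
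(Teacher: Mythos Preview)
Your proof is correct and follows essentially the same approach as the paper's: both reduce to the observation that, since $G$ is cubic, the common neighbours $v_2$ and $z$ together with the edge $v_4v_5$ block any induced matching from covering both $u$ and $v$, and then invoke Lemma~\ref{l-twinlemma} against the fact that $u$ and $v$ are not twins. You have simply made explicit the short case analysis that the paper leaves to the reader.
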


\begin{proof}
Suppose there is a $5$-cycle $C$ containing both $u$ and $v$. Then $u$ and $v$ are not twins. If $u$ and $v$ are adjacent, then they have no common neighbor due to the fact that $G$ is triangle-free. Suppose $u$ and $v$ are non-adjacent, and, for contradiction, assume there is a vertex $x\in V(G)\setminus V(C)$ such that $x$ is adjacent to both $u$ and $v$. Since $G$ is cubic due to Lemma~\ref{nodegree4}, there is no maximal induced matching in $G$ that covers both $u$ and $v$. Hence $u$ and $v$ must be twins due to Lemma~\ref{l-twinlemma}, yielding the desired contradiction.
\qed
\end{proof}

\begin{lemma}\label{onec4}
 $G$ contains at least one $4$-cycle.
\end{lemma}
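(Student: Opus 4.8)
For contradiction, suppose $G$ has no $4$-cycle. Since $G$ is triangle-free, it then has girth at least $5$, and since $G$ is cubic by Lemma~\ref{nodegree4}, this gives a lot of local room that I would exploit through Lemma~\ref{exc.inc.upperbound}. The quantitative consequences I would record are: for every edge $pq$ we have $|N[\{p,q\}]|=6$; no two neighbors of a vertex $u$ have a common neighbor other than $u$ itself (such a common neighbor would close a $4$-cycle through $u$); and, more to the point, if $p\in N(u)$ and $q\in N(p)\setminus\{u\}$, then no other neighbor of $u$ lies in $N[\{p,q\}]$, again because it would create a $4$-cycle through $u$.

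First I would fix a vertex $u$ with $N(u)=\{v_1,v_2,v_3\}$ and partition $M_G$ according to the edge of the matching incident with $u$: the three sets $M_G(\emptyset,\{u,v_i\})$ of maximal induced matchings containing $uv_i$, and the set of maximal induced matchings that do not cover $u$. Lemma~\ref{exc.inc.upperbound} applied to the edge $uv_i$ with $X=\emptyset$ bounds each of the first three sets by $3^{(n-6)/3}$. For the last set I would split further according to which of $v_1,v_2,v_3$ the matching covers, breaking ties by index: if the matching covers $v_i$ via an edge $v_iq$ (necessarily with $q\neq u$, since $u$ is not covered), I apply Lemma~\ref{exc.inc.upperbound} to $v_iq$, taking $X$ to be the set of those neighbors $v_j$ of $u$ that the case distinction has already declared uncovered. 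By the consequence above these $v_j$ lie outside $N[\{v_i,q\}]$, so they genuinely enlarge $|X\cup N[\{v_i,q\}]|$ and sharpen the bound; there are only two choices for $q$ in each case, so altogether these cases contribute a bounded number of terms of the form $3^{(n-7)/3}$ and $3^{(n-8)/3}$.

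The one delicate sub-case is when the matching $M$ covers none of $u,v_1,v_2,v_3$. Here the restriction of $M$ to $G-N[u]$ is a maximal induced matching of a graph on $n-4$ vertices, which by minimality of $G$ is not a counterexample; but in addition maximality of $M$ forces, for each $i$, at least one of the two non-$u$ neighbors of $v_i$ to be covered by $M$, for otherwise $uv_i$ could be added. These (at least three) forced second neighbors are pairwise distinct because $G$ has no $4$-cycle, so at least two distinct edges $e_1,e_2$ of $M$ meet the second neighborhood of $u$; deleting $N[u]\cup N[e_1]\cup N[e_2]$ removes many vertices (again using girth $5$ to see that these neighborhoods overlap only a little), and $M\setminus\{e_1,e_2\}$ is a maximal induced matching of the much smaller remaining graph, so this sub-case contributes only (constant)$\cdot 3^{(n-t)/3}$ with $t$ large. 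Summing all contributions would give $|M_G|\le c\cdot 3^{n/3}$ with $c<1$, contradicting that $G$ is a counterexample.

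I expect the arithmetic to be the real work, and the spacious sub-case just described to be the main obstacle: a naive treatment of it, bounding it by the bare $3^{(n-4)/3}$, already pushes the constant $c$ slightly above $1$, so the saving must genuinely come from the girth-$5$ structure forcing several essentially disjoint six-vertex neighborhoods to be deleted at once. Once the partition is arranged so that every piece is governed by a sufficiently negative exponent, the finitely many resulting numerical inequalities can be checked; and since any cubic triangle-free graph without a $4$-cycle has at least ten vertices, no separate treatment of tiny graphs is required.
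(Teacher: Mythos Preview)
Your strategy---partition $M_G$ around a fixed vertex and bound each piece via Lemma~\ref{exc.inc.upperbound} using girth $\ge 5$---is the paper's strategy, but your specific partition creates a leftover case you do not actually close, and your suggested fix for it is both more complicated than necessary and unverified.

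One small slip first: in the branch $M(\{u\},\{v_1\})$ no $v_j$ has yet been excluded, and $u$ already lies in $N[\{v_1,q\}]$, so this branch contributes $2\cdot 3^{(n-6)/3}$, not terms of order $3^{(n-7)/3}$. After the three $M(\emptyset,\{u,v_i\})$ branches and the three ``covers $v_i$ but not $u$'' branches you are at $5\cdot 3^{(n-6)/3}+2\cdot 3^{(n-7)/3}+2\cdot 3^{(n-8)/3}\approx 0.817\cdot 3^{n/3}$, leaving under $0.183\cdot 3^{n/3}$ for $M(\{u,v_1,v_2,v_3\},\emptyset)$.

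The real gap is there. Your two-edge plan requires enumerating pairs $(e_1,e_2)$ and controlling both the number of such pairs and the overlaps among $N[u],N[e_1],N[e_2]$; you do neither, and a naive count of pairs is far too large to beat the budget above. In fact a single extra level of branching suffices: since $uv_1$ cannot be added, split on whether $a_1$ (one fixed neighbour of $v_1$ other than $u$) is covered, and on the edge through $a_1$ or, failing that, through $b_1$. Girth $\ge 5$ keeps $u$---and in the $b_1$ sub-case also $a_1$---outside the relevant closed neighbourhood, and forces at most one of $v_2,v_3$ inside it, giving $2\cdot 3^{(n-8)/3}+2\cdot 3^{(n-9)/3}$ for the leftover. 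The grand total is then exactly the paper's $5\cdot 3^{(n-6)/3}+2\cdot 3^{(n-7)/3}+4\cdot 3^{(n-8)/3}+2\cdot 3^{(n-9)/3}<3^{n/3}$.

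The paper sidesteps the whole issue by a smarter partition: with $N(u)=\{v,w,x\}$ and $N(x)=\{u,s,t\}$ it branches on $v,w,s,t$ in turn rather than on $v,w,x$. Once all four of $v,w,s,t$ are excluded, maximality forces $ux\in M$ (every other edge at $u$ or at $x$ meets an excluded vertex), so the residual case is simply $M(\{v,w,s,t\},\{u,x\})\le 3^{(n-6)/3}$---no second-neighbourhood gymnastics required.
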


\begin{proof}
Assume for contradiction that $G$ contains no $4$-cycle. Let $u$ be an arbitrary vertex in $G$. Recall that $G$ is cubic due to Lemma~\ref{nodegree4}. Let $N(u)=\{v,w,x\}$ and $N(x)=\{u,s,t\}$.
We consider the following partition of $M_G$:
\[  M(\emptyset,\{v\}) \uplus M(\{v\},\{w\}) \uplus M(\{v,w\},\{s\}) \uplus M(\{v,w,s\},\{t\}) \uplus M(\{v,w,s,t\},\emptyset) \, . \]

Since $d(v)=3$ and the closed neighborhood of any edge incident with $v$ consists of six vertices, Lemma~\ref{exc.inc.upperbound} implies that
$$|M(\emptyset,\{v\})|\le 3\cdot3^{\frac{n-6}{3}}\, .$$

Let us consider $M(\{v\},\{w\})$. Note that $|M(\{v\},\{w\})|=\sum_{q\in N(w)}|M(\{v\},\{w,q\})|$. If $q=u$, then $|M(\{v\},\{w,q\})|\leq 3^{(n-6)/3}$ due to Lemma~\ref{exc.inc.upperbound} and that fact that $|N[\{w,q\}]|=6$. Now suppose $q\in N(w)\setminus \{u\}$. The assumption that there is no $4$-cycle in $G$ implies that $v\notin N[\{w,q\}]$, and consequenly $|M(\{v\},\{w,q\})|\le 3^{(n-7)/3}$ due to Lemma~\ref{exc.inc.upperbound}. We therefore have that  
 $$|M(\{v\},\{w\})|\le 3^{\frac{n-6}{3}}+2\cdot3^{\frac{n-7}{3}}\, .$$

We now find an upper bound on $|M(\{v,w\},\{s\})|$. Let $M\in M(\{v,w\},\{s\})$. Observe that $M$ covers neither $v$ nor $w$, but covers $s$. Since $M$ is an induced matching, it cannot cover $u$. This implies that $M(\{v,w\},\{s\})=M(\{u,v,w\},\{s\})$. Clearly, $|M(\{u,v,w\},\{s\})|=\sum_{q\in N(s)}|M(\{u,v,w\},\{s,q\})|$.
 
We claim that $|\{u,v,w\}\cup N[\{s,q\}]|\geq 8$ for any $q\in N(s)$. Suppose $q=x$. Since $G$ is triangle-free and has no $4$-cycles by assumption, neither $v$ nor $w$ belongs to $N[\{s,q\}]$, so $|\{u,v,w\}\cup N[\{s,q\}]|= 8$ in this case. Now suppose $q\in N(s)\setminus \{x\}$. Since $G$ has no triangles and no $4$-cycles, none of the vertices in $\{u,v,w\}$ belongs to $N[s]$. For the same reason, $u\notin N[q]$ and $q$ is not adjacent to both $v$ and $w$. Hence $|\{u,v,w\}\cup N[\{s,q\}]|\geq 8$ also in this case. Lemma~\ref{exc.inc.upperbound} now implies that
$$|M(\{v,w\},\{s\})|=|M(\{u,v,w\},\{s\})|\le 3\cdot3^{\frac{n-8}{3}}\, .$$

  Let us now consider $M(\{v,w,s\},\{t\})$. Clearly, it holds that $|M(\{v,w,s\},\{t\})|=\sum_{q\in N(t)}|M(\{v,w,s\},\{t,q\})|$. 
  Similar to the previous paragraph, we can use the assumption that $G$ contains neither triangles nor $4$-cycles to deduce that 
  $$|M(\{v,w,s\},\{t\})|= |M(\{u,v,w,s\},\{t\})| \le 3^{\frac{n-8}{3}}+2\cdot3^{\frac{n-9}{3}} \, .$$

It remains to consider $M(\{v,w,s,t\},\emptyset)$. Since any maximal induced matching in this set contains edge $ux$, we have that $M(\{v,w,s,t\},\emptyset)=M(\{v,w,s,t\},\{u,x\})$. Since $|N[\{u,x\}]|=6$, Lemma~\ref{exc.inc.upperbound} implies that 
  $$|M(\{v,w,p,q\},\emptyset)|=|M(\{v,w,p,q\},\{u,x\})|\le 3^{\frac{n-6}{3}}.$$

We conclude that
  $$
  |\mim{G}|\le 5\cdot3^{\frac{n-6}{3}}+2\cdot3^{\frac{n-7}{3}}+4\cdot3^{\frac{n-8}{3}}+2\cdot3^{\frac{n-9}{3}} < 3^{\frac{n}{3}} \, ,
  $$
yielding the desired contradiction.
 \qed
\end{proof}

\begin{lemma}\label{lastlemma}
$G$ is isomorphic to $K_{3,3}$.
\end{lemma}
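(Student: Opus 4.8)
\emph{Proof idea.} The plan is to squeeze out the structure forced by the earlier lemmas. By Lemmas~\ref{l-basis},~\ref{nodegree4} and~\ref{onec4}, $G$ is connected, cubic, triangle-free, and contains a $4$-cycle. Fix an induced $4$-cycle $C=v_1v_2v_3v_4$ (it is induced since $G$ is triangle-free), and for each $i$ let $u_i$ be the unique neighbour of $v_i$ outside $C$. Triangle-freeness gives $u_i\notin V(C)$ and $u_i\neq u_{i+1}$ (indices mod $4$), so the only possible coincidences are $u_1=u_3$ and/or $u_2=u_4$. Moreover, if $u_1\neq u_3$ and $u_1u_3\in E(G)$, then $v_1u_1u_3v_3v_2$ is a $5$-cycle whose vertices $v_1,v_3$ have the common neighbour $v_4\notin\{v_1,u_1,u_3,v_3,v_2\}$, contradicting Lemma~\ref{l-c5twin}; hence $u_1\not\sim u_3$, and symmetrically $u_2\not\sim u_4$.

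Next I would treat the case $u_1=u_3=:x$ and $u_2=u_4=:y$. The third neighbours $x'$ of $x$ and $y'$ of $y$ do not lie on $C$ (triangle-freeness). If $x'=y$ (equivalently $y'=x$), then $\{v_1,v_3,y\}$ and $\{v_2,v_4,x\}$ induce a $K_{3,3}$; since every vertex of this subgraph already has degree~$3$ and $G$ is connected, this is all of $G$, so $G\cong K_{3,3}$, as desired. If $x'\neq y$, I would derive the contradiction $|M_G|<3^{n/3}$: here $n\geq 7$ and $v_1,v_3$ are twins, so partitioning $M_G$ along the vertex sequence $v_1,v_3,v_2,v_4$ and splitting each block $M(X,\{p\})$ over the edges incident with $p$, each resulting set is bounded by Lemma~\ref{exc.inc.upperbound}; the forced extra vertices $x',y'$ make the relevant closed neighbourhoods large enough, and the leftover block $M(\{v_1,v_2,v_3,v_4\},\emptyset)$ is handled by noting that every matching in it must cover $x$ (hence the edge $xx'$) or both $u_2$ and $u_4$.

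It then remains to rule out the case in which not both coincidences hold; after relabelling, assume $u_1\neq u_3$, so again $n\geq 7$. I would partition $M_G$ along $v_1,v_2,v_3,v_4$ and bound each block via Lemma~\ref{exc.inc.upperbound}. Since $u_1\neq u_3$ and, by the above, $u_1\not\sim u_3$, the edges used to cover $v_3$ and $v_4$ have closed neighbourhoods containing extra vertices, improving those bounds; for the leftover block $M(\{v_1,v_2,v_3,v_4\},\emptyset)$ one observes that if $M$ covers none of $v_1,v_2,v_3,v_4$ then, for each $i$, some vertex of $\{u_i,u_{i+1}\}$ must be covered (otherwise $v_iv_{i+1}$ could be added), so the covered $u_i$'s contain $\{u_1,u_3\}$ or $\{u_2,u_4\}$; splitting once more along the edges at these vertices (and using the non-adjacencies) gives the bound. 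Adding everything up yields $|M_G|<3^{n/3}$ in every subcase, the desired contradiction, leaving only the case $u_1=u_3$, $u_2=u_4$, $x'=y$, i.e.\ $G\cong K_{3,3}$.

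The main obstacle is the counting in the two ``bad'' cases, and especially the leftover block $M(\{v_1,v_2,v_3,v_4\},\emptyset)$: a crude estimate there is not quite below $3^{n/3}$, so one must sub-divide according to which of $u_1,u_2,u_3,u_4$ are covered and, where necessary, also distinguish the sub-cases $u_2=u_4$ versus $u_1,u_2,u_3,u_4$ pairwise distinct in order to make all the exclusion sets large enough. The individual inequalities are elementary, but checking that each one really falls below $3^{n/3}$ requires careful bookkeeping of which further vertices are forced to be distinct from those already named.
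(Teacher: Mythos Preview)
Your plan is sound and could be pushed through, but it takes a genuinely different route from the paper and is noticeably heavier on case analysis. The paper does not fix an arbitrary $4$-cycle; instead it fixes an \emph{edge} $uv$ lying in the maximum number of $4$-cycles, writes $N(u)=\{v,a,d\}$ and $N(v)=\{u,b,c\}$, and partitions $M_G$ along the four outside neighbours $a,b,c,d$ rather than along the cycle vertices. This choice pays off twice. First, the leftover block $M(\{a,b,c,d\},\emptyset)$ is trivial: any maximal induced matching avoiding all of $a,b,c,d$ must contain the edge $uv$, so one application of Lemma~\ref{exc.inc.upperbound} gives $3^{(n-6)/3}$. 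Second, the extremality of $uv$ is used repeatedly to forbid stray adjacencies (``otherwise some other edge would lie in more $4$-cycles than $uv$''), which trims the structural possibilities to four explicit cases and keeps the bounds in Claims~3 and~4 short. In your partition along $v_1,\dots,v_4$, all the difficulty is shifted into the leftover block $M(\{v_1,v_2,v_3,v_4\},\emptyset)$, where you must branch on which of the $u_i$ are covered and additionally split according to coincidences ($u_2=u_4$ or not) and possible adjacencies among the $u_i$; as you note yourself, the crude estimate does not suffice and the margins are tight. Both arguments lean on Lemma~\ref{l-c5twin} in the same way; the extra ingredient in the paper is the extremal edge choice, which you do not use and which is exactly what makes its case analysis cleaner than yours.
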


\begin{proof}
Let $uv$ be an edge of $G$ such that no edge in $E(G)\setminus \{uv\}$ is contained in more $4$-cycles than $uv$ is. Since $u$ and $v$ are adjacent and $G$ is triangle-free, $u$ and $v$ have no common neighbor. Recall that $G$ is cubic due to Lemma~\ref{nodegree4}. Let $N(u)=\{a,d\}$ and $N(v)=\{b,c\}$. It is easy to see that edge $uv$ is contained in at most four $4$-cycles. 

If $uv$ is contained in exactly four $4$-cycles, then $G$ is isomorphic to $K_{3,3}$ and the lemma holds. Suppose $uv$ is contained in at most three $4$-cycles. Due to Lemma~\ref{onec4} and the choice of $uv$, edge $uv$ belongs to at least one $4$-cycle. Hence, there is at least one edge between  sets $\{a,d\}$ and $\{b,c\}$. Note that $ad\notin E(G)$ and $bc\notin E(G)$, as $G$ is triangle-free. We distinguish four cases, depending on the adjacencies between vertices in $\{a,d\}$ and $\{b,c\}$.
 
 \begin{itemize}
    \item[] {\em Case 1: $ab \in E(G)$ and $ac,db,dc\notin E(G)$.}
   \item[] {\em Case 2: $ab,ac\in E(G)$ and $db,dc\notin E(G)$.}
 \item[] {\em Case 3: $ab,cd\in E(G)$ and $ac,db\notin E(G)$.}
  \item[] {\em Case 4: $ab,ac,bd\in E(G)$ and $dc\notin E(G)$.}
 \end{itemize}
Note that $uv$ belongs to exactly one $4$-cycle in Case 1, to exactly two $4$-cycles in Cases 2 and 3, and to exactly three $4$-cycles in Case 4.

 We partiton $M_G$ into five sets as follows: 
 \begin{equation*}
 \label{eq:final}
 M_G=M(\emptyset,\{a\}) \uplus M(\{a\},\{b\}) \uplus M(\{a,b\},\{c\}) \uplus M(\{a,b,c\},\{d\}) \uplus M(\{a,b,c,d\},\emptyset) \,.
 \end{equation*}
In Claims 1--5 below, we prove upper bounds on the sizes of the five sets on the right-hand side of the above inequality. We then combine these five upper bounds in order to obtain an upper bound on $|M_G|$.

\medskip
\noindent
{\em Claim 1. $|M(\emptyset,\{a\})|\le 3\cdot 3^{(n-6)/3}$.}

\smallskip
\noindent
 Since $G$ is cubic due to Lemma~\ref{nodegree4}, the closed neighborhood of any of the three edges incident with $a$ consists of six vertices. Hence Lemma~\ref{exc.inc.upperbound} ensures that $|M(\emptyset,\{a,q\})|\leq 3^{(n-6)/3}$ for every $q\in N(a)$, implying the upper bound given in Claim~1.

\medskip
\noindent
{\em Claim 2. $|M(\{a\},\{b\})|\le 2\cdot 3^{(n-6)/3}$.}

\smallskip
\noindent
Note that in all four cases, $ab$ belongs to $E(G)$. By definition, there is no matching in $M(\{a\},\{b\})$ that contains $ab$. For any of the other two edges incident with $b$, its closed neighborhood has size~$6$. Hence the correctness of the claimed upper bound again follows from Lemma~\ref{exc.inc.upperbound}.

\medskip
\noindent
{\em Claim 3. $|M(\{a,b\},\{c\})|\le 3^{(n-6)/3}+3^{(n-7)/3}$.}

\smallskip
\noindent
 First we consider Case~1. In this case, the closed neighborhood of $cv$ contains vertex $b$ and it does not contain $a$. 
 Therefore, $|\{a,b\}\cup N[\{c,v\}]|=7$ and consequently, by Lemma~\ref{exc.inc.upperbound} we have that $|M(\{a,b\},\{c,v\})|\le 3^{(n-7)/3}$. 
 Let $cq$ be one of the other edges incident with $c$. Recall that $uv$ belongs to exactly one $4$-cycle in Case 1. Hence, vertex $q$ is not adjacent to $b$, as otherwise $bv$ belongs to two $4$-cycles, contradicting the choice of $uv$. We claim that $q$ is not adjacent to $a$. For contradiction, suppose $q$ is adjacent to $a$. Then $qabvc$ is a $5$-cycle containing $a$ and $v$, so $a$ and $v$ cannot have a common neighbor in $V(G)\setminus \{q,a,b,v,c\}$ due to Lemma~\ref{l-c5twin}. The fact that both $a$ and $v$ are adjacent to $u$ gives the desired contradiction. Hence, for any $q\in N(c)\setminus \{v\}$, we have that $|\{a,b\}\cup N[\{c,q\}]|=8$, and thus Lemma~\ref{exc.inc.upperbound} implies that $|M(\{a,b\},\{c,q\})|\le 3^{(n-8)/3}$. We obtain that $|M(\{a,b\},\{c\})|\le 3^{(n-7)/3}+2\cdot 3^{(n-8)/3}$, which is strictly smaller than $3^{(n-6)/3}+3^{(n-7)/3}$.
 
Let us now consider Case $2$. Observe that no matching in $M(\{a,b\},\{c\})$ contains edge $ac$. Hence $|M(\{a,b\},\{c\})|=|M(\{a,b\},\{c,v\})|+|M(\{a,b\},\{c,q\})|$, where $q$ is the neighbor of $c$ other than $v$ and $a$. Both vertices $a$ and $b$ belong to $N[\{c,v\}]$ and hence $|\{a,b\}\cup N[\{c,v\}]|=6$. Therefore, Lemma~\ref{exc.inc.upperbound} guarantees that $|M(\{a,b\},\{c,v\})|\le 3^{(n-6)/3}$. Note that $a\notin N[\{c,q\}]$. We claim that $b\notin N[\{c,q\}]$. For contradiction, suppose $b\in N[\{c,q\}]$. Then $b$ is adjacent to $q$, and hence $bv$ belongs to three $4$-cycles, namely $bvua$, $bvcq$, and $bvca$. Since $uv$ belongs to only two $4$-cycles in Case 2, this contradicts the choice of $uv$. Hence, we have that $|\{a,b\}\cup N[\{c,q\}]|=7$ and consequently $|M(\{a,b\},\{c,q\})|\le 3^{(n-7)/3}$ by Lemma~\ref{exc.inc.upperbound}. We conclude that $|M(\{a,b\},\{c\})|\le 3^{(n-6)/3}+3^{(n-7)/3}$.
 
For Case $3$, let $q$ be the neighbor of $c$ other than $d$ and $v$. Since $b\in N[\{c,v\}]$ and $a\notin N[\{c,v\}]$ in Case 3, we have that $|\{a,b\}\cup N[\{c,v\}]|=7$ and therefore $|M(\{a,b\},\{c,v\})|\le 3^{(n-7)/3}$ due to Lemma~\ref{exc.inc.upperbound}. Moreover, since $N[\{c,d\}]$ contains neither $a$ nor $b$, Lemma~\ref{exc.inc.upperbound} implies that $|M(\{a,b\},\{c,d\})|\le 3^{(n-8)/3}$. We now consider edge $cq$. Since no matching in $M(\{a,b\},\{c,q\})$ covers $u$, it holds that $M(\{a,b\},\{c,q\})=M(\{a,b,u\},\{c,q\})$. Recall that $N(u)=\{v,a,d\}$, so $q\notin N[u]$. For contradiction, suppose $a\in N[q]$. Then $qauvc$ is a $5$-cycle containing two vertices, namely $u$ and $c$, that have a common neighbor, namely $d$, in the set $V(G)\setminus \{q,a,u,v,c\}$. This contradicts Lemma~\ref{l-c5twin}, so we conclude that $a\notin N[q]$. Consequently, we have that $|\{a,b,u\}\cup N[\{c,p\}]|=8$, so Lemma~\ref{exc.inc.upperbound} implies that $|M(\{a,b\},\{c,p\})|\le 3^{(n-8)/3}$. We conclude that $|M(\{a,b\},\{c\})|\le 3^{(n-7)/3}+2\cdot 3^{(n-8)/3}<3^{(n-6)/3}+3^{(n-7)/3}$.

Finally, we consider Case $4$. Let $N(c=\{a,v,q\}$. Since no matching in $M(\{a,b\},\{c\})$ covers $a$, we have that $|M(\{a,b\},\{c\})|=|M(\{a,b\},\{c,v\})|+|M(\{a,b\},\{c,q\})|$. The fact that $|N[\{c,v\}]|=6$ together with Lemma~\ref{exc.inc.upperbound} readily implies that $|M(\{a,b\},\{c,v\})|\le 3^{(n-6)/3}$. Since $N(a)=\{u,b,c\}$ and $N(b)=\{v,a,c\}$ in this case, neither $a$ nor $b$ belongs to $N[\{c,q\}]$. Hence, $|\{a,b\}\cup N[\{c,q\}]|=8$ and using Lemma~\ref{exc.inc.upperbound}, we deduce that $|M(\{a,b\},\{c,q\})|\le 3^{(n-8)/3}$. We can therefore conclude that $|M(\{a,b\},\{c\})|\le 3^{(n-6)/3}+3^{(n-8)/3}< 3^{(n-6)/3}+3^{(n-7)/3}$.
 
\medskip
\noindent
{\em Claim 4. $|M(\{a,b,c\},\{d\})|\le 3^{(n-7)/3}+3^{(n-8)/3}$.}

\smallskip
\noindent
 First we consider Cases $1$ and $2$. In both of these cases, the closed neighborhood of $du$ contains $a$, but neither $b$ nor $c$ belong to $N[\{d,u\}]$. Therefore, $|\{a,b,c\}\cup N[\{d,u\}]|=8$ and consequently, by Lemma~\ref{exc.inc.upperbound}, we have that $|M(\{a,b,c\},\{d,u\})|\le 3^{(n-8)/3}$. 
Let $q\in N(d)\setminus \{u\}$. Since no matching in $M(\{a,b,c\},\{d,q\})$ covers $v$, we have that $M(\{a,b,c\},\{d,q\})=M(\{a,b,c,v\},\{d,q\})$. Edge $au$ belongs to all the $4$-cycles to which $uv$ belongs. By the choice of $uv$, edge $au$ does not belong to any other $4$-cycle. In particular, the vertices $\{a,q,d,u\}$ do not induce a $C_4$, which implies that $a\notin N[q]$. We claim that $b$ is also not adjacent to $q$. For contradiction, suppose $b\in N[q]$. Then vertices $b$ and $u$ are contained in a $5$-cycle, namely $bpduv$, so the fact that they are adjacent to $a$ contradicts Lemma~\ref{l-c5twin}. Finally, we observe that $v\notin N[q]$, since $N(v)=\{u,b,c\}$. From this, we deduce that $|\{a,b,c,v\}\cup N[\{d,q\}]|=9$, and hence $|M(\{a,b,c\},\{d,q\})|\le 3^{(n-9)/3}$ due to Lemma~\ref{exc.inc.upperbound}. We conclude that $|M(\{a,b,c\},\{d\})|\le 3^{(n-8)/3}+2\cdot 3^{(n-9)/3}$, which is less than $3^{(n-7)/3}+3^{(n-8)/3}$.
 
 Now we consider Case $3$. Since no matching in $M(\{a,b,c\},\{d\})$ contains edge $dc$, we have that $|M(\{a,b,c\},\{d\})|=|M(\{a,b,c\},\{d,u\})|+|M(\{a,b,c\},\{d,q\})|$, where $q$ is the neighbor of $d$ other than $c$ and $u$. In Case 3, both vertices $a$ and $c$ are in $N[\{d,u\}]$ and $b\notin N[\{d,u\}]$. Hence $|\{a,b,c\}\cup N[\{d,u\}]|=7$, and therefore Lemma~\ref{exc.inc.upperbound} implies that 
 $|M(\{a,b,c\},\{d,u\})|\le 3^{(n-7)/3}$. From the observation that no matching in $M(\{a,b,c\},\{d\})$ covers $v$, it follows that $M(\{a,b,c\},\{d,q\})=M(\{a,b,c,v\},\{d,q\})$. We claim that neither $v$ nor $b$ belong to $N[\{d,q\}]$. The fact that neither $v$ nor $b$ belongs to $N[d]$ follows from the triangle-freeness of $G$ and the fact that we are in Case 3. Moreover, since $N(v)=\{u,b,c\}$, we have that $v\notin N[q]$. For contradiction, suppose $b\in N[q]$. Then the vertices $\{q,b,v,u,d\}$ induce a $5$-cycle. Vertices $b$ and $u$ lie on this $5$-cycle and have a common neighbor, namely $a$, in $V(G)\setminus \{q,b,v,u,d\}$. This contradiction to Lemma~\ref{l-c5twin} implies that $\{v,b\}\cap N[\{d,q\}]=\emptyset$. Hence $|\{a,b,c,v\}\cup N[\{d,q\}]|\geq 8$, and we can use Lemma~\ref{exc.inc.upperbound} to find that $M(\{a,b,c\},\{d,q\})|\le 3^{(n-8)/3}$. Consequently, we have that $|M(\{a,b,c\},\{d\})|\le 3^{(n-7)/3}+3^{(n-8)/3}$.
 
It remains to consider Case $4$. Let $q$ be the neighbor of $d$ other than $b$ and $u$. Since edge $db$ is not contained in any of the matchings in $M(\{a,b,c\},\{d\})$, we have that $|M(\{a,b,c\},\{d\}|=|M(\{a,b,c\},\{d,u\})|+|M(\{a,b,c\},\{d,q\})|$. Since $c\notin N[\{d,u\}]$, we have that $|\{a,b,c\}\cup N[\{d,u\}]|=7$ and hence $|M(\{a,b,c\},\{d,u\})|\le 3^{(n-7)/3}$ due to Lemma~\ref{exc.inc.upperbound}. Observe that vertex $v$ is not covered by any matching in $M(\{a,b,c\},\{d,q\})$, which implies that $M(\{a,b,c\},\{d,q\})=M(\{a,b,c,v\},\{d,q\})$. Since $N(a)=N(v)=\{u,b,c\}$, we have that neither $a$ nor $v$ belongs to $N[\{d,q\}]$. We now show that $c$ does not belong to $N[\{d,q\}]$ either. For contradiction, suppose otherwise. Since $c$ is not adjacent to $d$ in Case 4, vertex $c$ must be adjacent to $q$. Hence the vertices $\{c,q,d,u,v\}$ induce a $5$-cycle. By Lemma~\ref{l-c5twin}, no two vertices on this cycle have a common neighbor outside the cycle, contradicting the fact that both $u$ and $c$ are adjacent to $a$. This implies that $|\{a,b,c,v\}\cup N[\{d,q\}]|\geq 9$, so $|M(\{a,b,c\},\{d,q\})|\le 3^{(n-9)/3}$ due to Lemma~\ref{exc.inc.upperbound}. We conclude that $|M(\{a,b,c\},\{d\})|\le 3^{(n-7)/3}+3^{(n-9)/3}$, which is clearly upper bounded by $3^{(n-7)/3}+3^{(n-8)/3}$.
 
\medskip
\noindent
{\em Claim 5. $|M(\{a,b,c,d\},\emptyset)|\le 3^{(n-6)/3}$.}

\smallskip
\noindent
Every maximal induced matching in $G$ that does not cover any vertex in $\{a,b,c,d\}$ contains edge $uv$. Therefore, $M(\{a,b,c,d\},\emptyset)=M(\{a,b,c,d\},\{u,v\})$. Since $|N[\{u,v\}]|=6$ due to the fact that $G$ is cubic by Lemma~\ref{nodegree4}, it follows from Lemma~\ref{exc.inc.upperbound} that $|M(\{a,b,c,d\},\{u,v\})|\le 3^{(n-6)/3}$. This completes the proof of Claim 5.
 
\medskip
Combining the upper bounds in Claims 1--5 yields the following:
$$|M_G|\le 7\cdot 3^{\frac{n-6}{3}}+2\cdot 3^{\frac{n-7}{3}}+3^{\frac{n-8}{3}} < 3^{\frac{n}{3}}\,.$$
This contradicts the assumption that $G$ is a counterexample.
\qed 
\end{proof}

Lemma~\ref{lastlemma} states that $G$ is isomorphic to $K_{3,3}$, so in particular $n=6$. Since every maximal induced matching in $K_{3,3}$ consists of a single edge, we have that $|\mim{G}|=9=3^{n/3}$, contradicting the assumption that $G$ is a counterexample. This contradiction implies that any triangle-free graph on $n$ vertices has at most $3^{n/3}$ maximal induced matchings.

It remains to show that the bound in Theorem~\ref{t-main} is best possible. Let $G$ be the disjoint union of $p$ copies of $K_{3,3}$ for some positive integer $p$. Every maximal induced matching in $G$ contains exactly one edge of each connected component of $G$, which implies that $|\mim{G}|=9^p=9^{n/6}=3^{n/3}$. This completes the proof of Theorem~\ref{t-main}.

\end{document}